\newcommand\reallywidehat[1]{%
	\savestack{\tmpbox}{\stretchto{%
			\scaleto{%
				\scalerel*[\widthof{\ensuremath{#1}}]{\kern-.6pt\bigwedge\kern-.6pt}%
				{\rule[-\textheight/2]{1ex}{\textheight}}%WIDTH-LIMITED BIG WEDGE
			}{\textheight}% 
		}{0.5ex}}%
	\stackon[1pt]{#1}{\tmpbox}%
}
\renewcommand*{\backref}[1]{}
\renewcommand*{\backrefalt}[4]{%
	\ifcase #1 (Not cited.)%
	\or        (Cited on page~#2.)%
	\else      (Cited on pages~#2.)%
	\fi}
\newcommand{\K}{K\"ahler}
\DeclareMathOperator{\Ker}{Ker}
\numberwithin{equation}{section}
\def\eqref#1{(\ref{#1})}
\newcommand{\Z}{{\mathbb Z}}
\newcommand{\C}{{\mathbb C}}
\newcommand{\R}{{\mathbb R}}
\renewcommand{\H}{{\mathbb H}}
\def\1{\sqrt{-1}\:}
\newcommand{\cntrct}                % contraction with a vector field
{\hspace{2pt}\raisebox{1pt}{\text{$\lrcorner$}}\hspace{2pt}}
\newcommand{\Id}{\operatorname{Id}}
\renewcommand{\dim}{\operatorname{dim}}
\newcommand{\Img}{\operatorname{Im}}
\renewcommand{\Re}{\operatorname{Re}}
\renewcommand{\Im}{\operatorname{Im}}
\newcommand{\ie}{{\em i.e. }}
\newcommand{\eg}{{\em e.g. }}
\renewcommand{\to}{\longrightarrow}
\newcounter{Mycounter}[section]
\newcounter{lemma}[section]
\newcounter{claim}[section]
\newcounter{sublemma}[section]
\newcounter{corollary}[section]
\newcounter{theorem}[section]
\newcounter{conjecture}[section]
\newcounter{proposition}[section]
\newcounter{definition}[section]
\newcounter{example}[section]
\newcounter{remark}[section]
\newcounter{problem}[section]
\newcounter{question}[section]
\DeclareRobustCommand*{\mfaktor}[3][]
{
   { \mathpalette{\mfaktor@impl@}{{#1}{#2}{#3}} }
}
\newcommand*{\mfaktor@impl@}[2]{\mfaktor@impl#1#2}
\newcommand*{\mfaktor@impl}[4]{
   \settoheight{\faktor@zaehlerhoehe}{\ensuremath{#1#2{#3}}}%
   \settoheight{\faktor@nennerhoehe}{\ensuremath{#1#2{#4}}}%
      \raisebox{-0.5\faktor@zaehlerhoehe}{\ensuremath{#1#2{#3}}}%
      \mkern-4mu\diagdown\mkern-5mu%
      \raisebox{0.5\faktor@nennerhoehe}{\ensuremath{#1#2{#4}}}%
}
\tikzset{join/.code=\tikzset{after node path={%
			\ifx\tikzchainprevious\pgfutil@empty\else(\tikzchainprevious)%
			edge[every join]#1(\tikzchaincurrent)\fi}}}
\tikzset{>=stealth',every on chain/.append style={join},
	every join/.style={->}}
\newtheorem*{rep@theorem}{\rep@title}
\newcommand{\newreptheorem}[2]{%
	\newenvironment{rep#1}[1]{%
		\def\rep@title{\ref{##1}}%
		\begin{rep@theorem}}%
		{\end{rep@theorem}}}
\newtheoremstyle{named}{}{}{\itshape}{}{\bfseries}{.}{.5em}{\thmnote{#3's }#1}
\theoremstyle{named}
\begin{document}
	
	\newpage
	
	\title[Special non-K\" ahler metrics on Endo-Pajitnov manifolds]{Special non-K\" ahler metrics on Endo-Pajitnov manifolds}

        \author{Cristian Ciulic\u a}
	\address{Cristian Ciulic\u a  \newline
		\textsc{\indent University of Bucharest, Faculty of Mathematics and Computer Science\newline 
			\indent 14 Academiei Str., Bucharest, Romania}}
	\email{cristiciulica@yahoo.com}
	
	\author{Alexandra Otiman}
	\address{Alexandra Otiman \newline
		\textsc{\indent Institut for Matematik and Aarhus Institute of Advanced Studies, Aarhus University\newline 
			\indent 8000, Aarhus C, Denmark\newline
			\indent \indent and\newline
			\indent Institute of Mathematics ``Simion Stoilow'' of the Romanian Academy\newline 
			\indent 21 Calea Grivitei Street, 010702, Bucharest, Romania}}
	\email{alexandra.otiman@imar.ro; aiotiman@aias.au.dk}

	\author{Miron Stanciu}
	\address{Miron Stanciu \newline
		\textsc{\indent University of Bucharest, Faculty of Mathematics and Computer Science\newline 
			\indent 14 Academiei Str., Bucharest, Romania \newline
			\indent \indent and \newline
			\indent Institute of Mathematics ``Simion Stoilow'' of the Romanian Academy\newline 
			\indent 21 Calea Grivitei Street, 010702, Bucharest, Romania}}
	\email{miron.stanciu@fmi.unibuc.ro; miron.stanciu@imar.ro}
	
	\thanks{Cristian Ciulic\u a was partially supported by a grant of Ministry of Research and Innovation, CNCS - UEFISCDI, project no. PN-III-P4-ID-PCE-2020-0025, within PNCDI III.\\
		\indent Alexandra Otiman was partially supported by the European
Union’s Horizon 2020 research and innovation programme under the Marie Sk\l{}odowska-Curie grant agreement No 754513 and Aarhus University Research Foundation and by a grant of the Ministry of Research and Innovation, CNCS - UEFISCDI, project no. P1-1.1-PD-2021-0474.  \\
		\indent Miron Stanciu was partially supported by a grant of Ministry of Research and Innovation, CNCS - UEFISCDI, project no.
		PN-III-P1-1.1-TE-2021-0228, within PNCDI III. \\\\[.1cm]
		{\bf Keywords:} solvmanifold, Hermitian metric, balanced, locally conformally \K, pluriclosed, astheno-\K \\
		{\bf 2020 Mathematics Subject Classification:} 53C55, 22E25, 32J18.
	}
	
	\date{\today}

	\begin{abstract}
		We investigate the metric and cohomological properties of higher dimensional analogues of Inoue surfaces, that were introduced by Endo and Pajitnov. We provide a solvmanifold structure and show that in the diagonalizable case, they are formal and have invariant de Rham cohomology.  Moreover, we obtain an arithmetic and cohomological characterization of pluriclosed and astheno-K\" ahler metrics and show they give new examples in all complex dimensions.  
	\end{abstract}
	
	\maketitle
	
	\hypersetup{linkcolor=blue}
%	\tableofcontents

	\section{Introduction}
	
        While \K \ metrics yield powerful geometric results in complex analysis, their existence on compact manifolds is limited by strict topological and geometric constraints. This has motivated the investigation of alternative Hermitian metrics that still enable significant geometric insights. By either relaxing the cohomological or analytic requirements of the \K \ condition, a number of special Hermitian metrics have been proposed, whose existence might be canonical and lead to important analogous results from the \K \ setting. Among the most studied are \textit{balanced, locally conformally \K (lcK), pluriclosed} and \textit{astheno-\K}, the last two being particular cases of \textit{generalized Gauduchon} metrics. For more details about these geometries and their usefulness, we refer the reader to \cite{m82}, \cite{bel00}, \cite{OV_book}, \cite{bis89}, \cite{jostyau}, \cite{fgv19}, \cite{fww13}.

        In \cite{inoue}, Inoue constructed complex surfaces that have been widely studied for their interesting properties, among them the fact that they do not contain complex curves and do not admit \K \ metrics. This construction was generalized in a very well-known way by Oeljeklaus and Toma using algebraic number theory (\cite{ot}) to what become known as Oeljeklaus-Toma (OT) manifolds, a larger class of non-\K \ manifolds that sometimes (see \cite{dub14}, \cite{vuliOT}) admit lcK structures.
        
        In 2019 Endo and Pajitnov (\cite{pajitnov1}) proposed another generalization of Inoue surfaces to any dimension, that does not come from algebraic number theory, but only depends on an integer matrix $M$ with special requirements on its eigenvalues, just like the original construction. They proved that for most choices of $M$, these manifolds $T_M$ do not coincide with OT manifolds, instead being a new class of non-\K \ manifolds on which the existence of other Hermitian metrics like the ones mentioned above might be investigated. The authors only looked at the lcK case and in fact showed that for most matrices $M$, $T_M$ does not admit lcK metrics. 

        In this paper, our main goal was to find previously unknown examples of special non-\K \ manifolds using the Endo-Pajitnov constructions; to do this, we first proved that they are all solvmanifolds (but not nilmanifolds), computed all their Betti numbers and then used this to precisely describe the algebraic conditions that need to be imposed on the matrix $M$ in order for each of the types of metrics listed above to exist on $T_M$. These conditions lend themselves well to examples; we have included a method of constructing manifolds in any dimension that admit pluriclosed and astheno-\K \ metrics starting from an integer polynomial with a certain decomposition. Moreover, this class brings more evidence towards the recent conjectures of Fino-Vezzoni (see \cite{fv}) and Fino-Grantcharov-Verbitsky predicting an incompatibility between pluriclosed and balanced and pluriclosed, balanced and lcK metrics in complex dimension at least 3 (see \cite{fgv22}).  

        \medskip
        
        The paper is organized as follows.
        In Section \ref{sec:prelim}, we revisit the construction of the $T_M$-manifolds, remind a few properties about them that were proven in the original paper and also define the various types of non-\K \ metrics that we are interested in.
        In Section \ref{sec:solv}, we prove that all $T_M$ have a natural solvmanifold structure and also exhibit it explicitly with respect to $M$.
        In Section \ref{sec:coomologie}, we use the fact that all $T_M$ are mapping tori to compute all their Betti numbers. We find the precise algebraic conditions that must be imposed on the (eigenvalues of the) matrix $M$ in order for the higher Betti numbers which were not computed in \cite{pajitnov1} to be non-trivial.
        Finally, in Section \ref{sec:metrici}, we reach our main goal of investigating the existence of non-\K \ structures on Endo-Pajitnov manifolds, namely prove that they never admit balanced or lcK metrics but always admit lcb metrics, and find necessary and sufficient conditions for them to admit pluriclosed and astheno-\K \ metrics. We end with a new type of example that works in all dimensions and that, in particular, is not of OT type and carries a metric which is both pluriclosed and astheno-\K.

        \section{Preliminaries}
        \label{sec:prelim}
        
        \begin{definition}
            We start by listing the non-\K \ structures on an $n$-dimensional complex manifold whose existence will be of interest:
            \begin{itemize}
                \item {\it Locally conformally Kähler}, widely studied since I. Vaisman's paper  \cite{vai76}. A Hermitian metric $g$ is called {\it locally conformally K\"ahler (lcK)} if there exists a covering $(U_i)_i$ of the manifold and smooth functions $f_i$ on each $U_i$ such that $e^{-f_i}g$ is K\" ahler. The definition is conformally invariant and is equivalent to the existence of a closed one-form $\theta$ (called the {\em Lee form} such that the fundamental form $\omega$ of the metric $g$ satisfies $d\omega=\theta \wedge \omega$. For a recent comprehensive study of the development of lcK geometry, see \cite{OV_book}.
                
                \item {\it  Pluriclosed}, also referred to in the literature as {\em strongly K\"ahler with torsion}.  A Hermitian metric $g$ is called {\it pluriclosed} if its fundamental form $\omega$ satisfies $dd^c \omega=0$. See {\em e.g.}  \cite{bis89}.
                
                \item {\it  Astheno-\K}. A Hermitian metric $g$ is called {\it astheno-\K} if its fundamental form $\omega$ satisfies $dd^c\omega^{n-2}=0$. See \cite{jostyau}, \cite{fgv19}.
                
                \item {\it Balanced} (or {\em semi-K\"ahler}). A Hermitian metric $g$ is called {\it balanced} if its fundamental form $\omega$ satisfies $d\omega^{n-1}=0$, or equivalently if $\omega$ is co-closed. See  {\em e.g.} \cite{m82}.

                \item {\it Locally conformally balanced}. More generally, a Hermitian metric $g$ is called {\it locally conformally balanced (lcb)} if its fundamental form $\omega$ satisfies $d \omega^{n-1} = \theta \wedge \omega^{n-1}$ for a closed $1$-form $\theta$ (called the Lee form). lcK metrics are in particular examples of lcb. 
            \end{itemize}
        \end{definition}

        \medskip
        
        We now recall the construction of the Endo-Pajitnov manifolds from \cite{pajitnov1}.

        Let $n > 1$ and $M=(m_{ij})_{i, j} \in \textrm{SL}(2n+1, \mathbb{Z})$ such that the eigenvalues of $M$ are $\alpha, \beta_1, \cdots, \beta_k$, $\overline{\beta_1}, \cdots, \overline{\beta_k}$ with $\alpha>0, \alpha \neq 1$ and $\Im(\beta_j)>0$.
        
        Denote by $V$ the eigenspace corresponding to $\alpha$ and take $$W(\beta_j)=\{x \in \mathbb{C}^{2n+1}\ | \ \exists \ N \in \mathbb{N} \text{ such that } (M-\beta_j I)^Nx=0\}$$ and $W=\bigoplus \limits_{j=1}^{k} W(\beta_j)$, $\overline{W}=\bigoplus\limits_{j=1}^{k} W(\overline{\beta_j})$. Hence, $\mathbb{C}^{2n+1}=V \bigoplus W \bigoplus \overline{W}$.

        Let $a \in \R^{2n+1}$ be a non-zero eigenvector corresponding to $\alpha$ and take $\{b_1, \cdots, b_n\}$ a basis in $W$,
        \[
        a=(a^1, a^2,  \ldots,  a^{2n+1})^T, \ b_i=( b_{i}^1, b_{i}^2 \ldots,  b_{i}^{2n+1})^T, \ \forall i=\overline{1,n}.
        \]
        For any $1 \le i \le 2n + 1$, let $u_i = (a^i, b_1^i, ..., b_n^i) \in \R \times \C^n \simeq \R^{2n+1}$. Note that since $\{a, b_1, \ldots, b_n, \overline{b_1} \dots, \overline{b_n}\}$ is a basis in $\mathbb{C}^{2n+1}$, we have that $\{u_1, ..., u_{2n+1} \}$ are linearly independent over $\R$. 
        
        Let $f_M:W \longrightarrow W$ be the restriction of the multiplication with $M$ on $W$ and $R=(r_{ij})_{i, j}$ be the matrix of $f_M$ with respect to the basis $\{b_1, ..., b_n\}$. We consider the  automorphisms $g_0, g_1, \ldots, g_{2n+1}: \mathbb{H} \times \mathbb{C}^n \longrightarrow \mathbb{H} \times \mathbb{C}^n$,
        \[
            g_0(w,z)=(\alpha w, R^Tz), \ g_i(w,z)=(w,z)+u_i, \forall w \in \mathbb{H}, \forall z \in \mathbb{C}^n.
        \]
        Note that these are well defined because $\alpha > 0$ and the first component of $u_i$ is $a^i \in \R$.
        
        Let $G_M$ be the subgroup of Aut$(\mathbb{H} \times \mathbb{C}^n)$ generated by $g_0, g_1, \ldots, g_{2n+1}$. Pajitnov and Endo proved in \cite{pajitnov1} that the action of $G_M$ on $\mathbb{H} \times \mathbb{C}^n$ is free and properly discontinous. Hence, the quotient $T_M:= {G_M} \backslash (\mathbb{H} \times \mathbb{C}^n)$ is a compact complex manifold of complex dimension $n+1$, with $\pi_1(T_M) \simeq G_M$.

        \begin{remark}
        \label{rmk:nudepdeformaJordan}
            Note that the biholomorphism class of $T_M$ does not depend on the choice of basis $\{b_1, ..., b_n\}$.
        \end{remark}   

        \begin{remark}
            In the same paper, the authors prove that:
            \begin{itemize}
                \item The first Betti number $h^1(T^M) = 1$, so $T_M$ is non-\K.
                \item If $M$ is diagonalizable, then some $T_M$ are biholomorphic to OT manifolds (\cite[Proposition 5.3]{pajitnov1});
                \item If $M$ is not diagonalizable, then $T_M$ cannot be biholomorphic to any OT manifold (\cite[Proposition 5.6]{pajitnov1}) and does not admit lcK metrics (\cite[Proposition 4.6]{pajitnov1}).
            \end{itemize}
        \end{remark}            

        \bigskip

        \noindent{\bf Conventions.} Throughout the paper we shall use the conventions from \cite[(2.1)]{bes87} for the complex structure $J$ acting on complex forms on a complex manifold $(M, J)$. Namely:
        \begin{itemize}
            \item $J\alpha=i^{q-p} \alpha$, for any $\alpha \in \Lambda^{p, q}_{\mathbb{C}}M$, or equivalently $$J\eta(X_1, \ldots, X_p) = (-1)^p\eta(JX_1, \ldots, JX_p);$$
            \item the fundamental form of a Hermitian metric is given by $\omega (X, Y): = g(JX, Y)$;
            \item the operator $d^c$ is defined as $d^c := -J^{-1}dJ$, where $J^{-1} = (-1)^{\deg \alpha} J$. 
        \end{itemize}

        \section{The solvmanifold structure}
        \label{sec:solv}

       We shall endow $T_M$ with an explicit solvmanifold structure, which will later play an important role in finding special Hermitian metrics. This is done by identifying $\mathbb{H} \times \mathbb{C}^n$ with a Lie matrix group which is solvable and $G_M$ with a discrete subgroup. Readers familiar with OT manifolds will find many similarities between the constructions.

        First, note that the the biholomorphism class of $T_M$ does not depend on the choice of basis $\{b_1, ..., b_n\}$ described above. We then choose a basis such that $R^T$ is in the canonical Jordan form. As the eigenvalues $\beta_1, ..., \beta_n$ of $R$ sit above the real line, the logarithm of $R$ is well defined (see \eg \cite[Theorem 1.31]{higham}) and we can also define $R^t$ for any real power $t$ by the formula $R^t = \exp(t\log R)$ and still have $R^{t + s} = R^t \cdot R^s$ for any $t, s \in \R$.

        \begin{theorem}
            \label{th:solv}
            $T_M$ has a natural solvmanifold structure.

            Specifically, $T_M \simeq \mfaktor{\Gamma}{G}$, where $G$ is a solvable Lie group of dimension $2n + 2$, $\Gamma \le G$ is a discrete subgroup and the Lie algebra of $G$ has the structure equations
            
            $\mathfrak{g} = \langle A, X, Y_1, ..., Y_n, Y_{n+1}, ..., Y_{2n}\rangle_\R$
            \begin{equation}
                \label{ecstructura}
                \left\{
                \begin{split}
                    [A, X] &= \log \alpha X, \\
                    [A, Y_j] &= \sum\limits_{i \le j} \Re \Delta_{ij} Y_i + \sum\limits_{i \le j} \Im \Delta_{ij} Y_{n + i}, \ \forall j = \overline{1, n}, \\
                    [A, Y_{n + j}] &= -\sum\limits_{i \le j} \Im \Delta_{ij} Y_i + \sum\limits_{i \le j} \Re \Delta_{ij} Y_{n + i}, \ \forall j = \overline{1, n}, \\
                    [X, Y_j] &= 0, \ \forall j = \overline{1, 2n}, \\
                    [Y_j, Y_k] &= 0, \ \forall j, k = \overline{1, 2n},
                \end{split}
                \right.
            \end{equation}
            where $\Delta = \log R^T$ is an upper-triangular matrix and $\Delta_{jj} = \log \beta_j$. 
            
        \end{theorem}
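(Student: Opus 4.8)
The plan is to realize $\H \times \C^n$ itself as a solvable Lie group and then to recognize $G_M$ as a lattice inside it. I would set $\psi \colon \R \to \mathrm{GL}(\R \times \C^n)$, $\psi(t)(s, z) = (\alpha^t s,\, (R^T)^t z)$, a well-defined one-parameter group since $\alpha > 0$ and $R^t = \exp(t\log R)$ was already shown to make sense, and let $G = \R \ltimes_\psi (\R \times \C^n)$ with product $(t_1; v_1)(t_2; v_2) = (t_1 + t_2;\, v_1 + \psi(t_1) v_2)$. This is a Lie group of dimension $1 + 1 + 2n = 2n + 2$, and it is solvable because its derived subgroup lies in the abelian normal subgroup $\R \times \C^n$. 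The key geometric input is that the affine group $\{w \mapsto \alpha^t w + s : t, s \in \R\}$ acts simply transitively on $\H$, with orbit map of $i$ given by $(t, s) \mapsto s + i\alpha^t$; incorporating the $\C^n$-factor, I would define
\[
    \Phi \colon G \to \H \times \C^n, \qquad \Phi(t; s, z) = (s + i\alpha^t,\, z),
\]
a diffeomorphism whose inverse reads off $s = \Re w$, $\alpha^t = \Im w$ and leaves $z = \zeta$.

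Next I would verify that $\Phi$ conjugates left translation on $G$ into the $G_M$-action. A direct computation of $\Phi \circ L_{(t_0; s_0, z_0)} \circ \Phi^{-1}$ yields the transformation $(w, \zeta) \mapsto (\alpha^{t_0} w + s_0,\, (R^T)^{t_0}\zeta + z_0)$ of $\H \times \C^n$. In particular $\gamma_0 = (1; 0, 0)$ reproduces $g_0(w, \zeta) = (\alpha w, R^T \zeta)$, while $\gamma_i = (0;\, a^i,\, (b_1^i, \dots, b_n^i))$ reproduces the translation $g_i$. Since $\Phi$ is a diffeomorphism intertwining the two actions and left translation is faithful, the assignment $g_j \mapsto \gamma_j$ extends to a group isomorphism $G_M \simeq \Gamma$, where $\Gamma = \langle \gamma_0, \dots, \gamma_{2n+1}\rangle \le G$, and it descends to a diffeomorphism $T_M = G_M \backslash (\H \times \C^n) \simeq \Gamma \backslash G$. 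Because the $G_M$-action is free and properly discontinuous (proved in \cite{pajitnov1}), the conjugate left-translation action of $\Gamma$ is too, which forces $\Gamma$ to be discrete; compactness of $T_M$ then makes $\Gamma$ cocompact, so $\Gamma \backslash G$ is a genuine solvmanifold. Here the linear independence over $\R$ of $u_1, \dots, u_{2n+1}$ noted in the preliminaries is what guarantees that $\gamma_1, \dots, \gamma_{2n+1}$ generate a full lattice in the abelian factor.

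Finally, I would read off the structure equations from the semidirect product. The ideal $\mathfrak{h} = \langle X, Y_1, \dots, Y_{2n}\rangle$ corresponding to $\R \times \C^n$ is abelian, giving $[X, Y_j] = 0$ and $[Y_j, Y_k] = 0$, while $\mathrm{ad}_A = \psi'(0)$ is block diagonal: the $\R$-block gives $[A, X] = \log\alpha\, X$, and the $\C^n$-block is the derivative of $(R^T)^t$ at $t = 0$, namely multiplication by $\Delta = \log R^T$. Writing $Y_j$ and $Y_{n+j}$ for the real and imaginary parts of the $j$-th complex coordinate, the realification of complex multiplication by $\Delta_{ij} = \Re\Delta_{ij} + i\,\Im\Delta_{ij}$ turns $[A, Y_j]$ and $[A, Y_{n+j}]$ into precisely the second and third lines of \eqref{ecstructura}; upper-triangularity of the Jordan form of $R^T$ restricts the sums to $i \le j$ and puts $\log\beta_j$ on the diagonal, i.e. $\Delta_{jj} = \log\beta_j$.

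I expect the main obstacle to be the first two steps taken together: choosing $\Phi$ so that the single generator $g_0$, which simultaneously dilates the $\H$-coordinate and applies $R^T$ to the $\C^n$-coordinate, together with the pure translations $g_i$, all become left translations of one and the same group $G$. The resolution is the observation that $\H$ is the $ax+b$ group, which lets the dilation on $\H$ and the linear action on $\C^n$ be packaged into the single one-parameter automorphism $\psi$; once $\Phi$ is fixed, everything else reduces to a bracket computation and an appeal to the already-established proper discontinuity.
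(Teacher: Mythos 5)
Your proposal is correct, and the underlying construction is the same as the paper's: your semidirect product $\R \ltimes_\psi (\R \times \C^n)$ with product $(t_1;v_1)(t_2;v_2) = (t_1+t_2;\, v_1+\psi(t_1)v_2)$ is exactly the group the paper realizes concretely as a matrix group $G \subset \mathrm{GL}_{2n+2}(\C)$ with law $A(x,t,z)\cdot A(y,s,\zeta) = A(\alpha^t y + x,\, t+s,\, (R^t)^T\zeta + z)$, and your $\Phi$ is the inverse of the paper's identification $\varphi(x+i\alpha^t,z) = A(x,t,z)$. Where you genuinely diverge is in the verifications. For solvability you argue softly that $G$ is abelian-by-abelian, whereas the paper observes that, with $R^T$ in Jordan form, $G$ consists of upper-triangular matrices; both are fine, and both ultimately need the Jordan basis anyway so that $\Delta$ is upper-triangular in \eqref{ecstructura}. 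More substantively, you never exhibit $\Gamma$ explicitly: you define it as the subgroup generated by $\gamma_0,\dots,\gamma_{2n+1}$ and obtain discreteness by transporting the proper discontinuity of the $G_M$-action (from \cite{pajitnov1}) through the equivariant diffeomorphism $\Phi$ --- a valid argument, since a subgroup acting properly discontinuously by left translations must be discrete. The paper instead parametrizes $\Gamma$ by pairs $(m,W)\in\Z\times\Z^{2n+1}$ and proves the group law $\gamma(m_1,W_1)\gamma(m_2,W_2)=\gamma(m_1+m_2,\,M^{m_1}W_1+W_2)$ via the intertwining identity $MB=BR$; this costs a computation you avoid, but buys an explicit description of $\Gamma$ as the integer lattice $\Z\ltimes_M\Z^{2n+1}$, making the lattice structure (and the isomorphism with $G_M$) completely transparent. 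Your derivation of the structure equations --- $\mathrm{ad}_A$ acting on the abelian ideal as $\log\alpha \oplus \log R^T$, then realified --- agrees with the paper's differentiation of explicit matrix curves, and your realification computation reproduces \eqref{ecstructura} correctly.
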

        
        \begin{proof}

        Consider $G \subset \textrm{GL}_{2n+2}(\C)$,
        \[
        G =
        \left\{
            \left.
            A(x, t, z) = 
                \begin{pmatrix}
                \alpha^t & 0 & 0  & x  \\
                 0 & (R^t)^T & 0 &    z\\
                 0 & 0 & (\overline{R^t})^T &    \overline{z}\\
                 0 & 0 & 0  & 1
                \end{pmatrix}
            \right\vert
            t \in \mathbb{R}, x \in \mathbb{R}, z^T \in \mathbb{C}^n
        \right\}.
        \]
        
        By elementary computations, $A(x, t, z) \cdot A(y, s, \zeta) = A(\alpha^t y + x,t + s, (R^t)^T \zeta + z)$ and therefore, $G$ is a subgroup of $\textrm{GL}_{2n+2}(\C)$. As it is clearly also a submanifold of $\textrm{GL}_{2n+2}(\C)$, it is a Lie group. Since $\varphi: \H \times \C^n \to G, \varphi(x + i \alpha^t, z) = A(x, t, z)$ is a diffeomorphism, we endow $\H \times \C^n$ with the induced group structure. To see that $G$ (and thus $\H \times \C^n$) is solvable, the only thing left is to note that, by our choice of basis such that $R^T$ is in Jordan form, $G$ contains only upper-triangular matrices, hence so does its Lie algebra and therefore it follows easily that it is solvable.

        We will now express the action of $G_M$ on $\H \times \C^n$ as the action of a discrete subgroup $\Gamma$ on $G$. Indeed, take
        \[
        \Gamma = 
        \left\{
            \left.
            \gamma(m, W) = 
                \begin{pmatrix}
                \alpha^m & 0 & 0  & \  \\
                 0 & (R^m)^T & 0 &  P \cdot W \\
                 0 & 0 & (\overline{R^m})^T &  \ \\
                 0 & 0 & 0  & 1
                \end{pmatrix}
            \right\vert
            m \in \Z, W^T \in \Z^{2n+1}
        \right\},
        \]
        where $P=\begin{pmatrix} a \\ b_1 \\ \vdots \\ b_n \\ \overline{b_1} \\ \vdots \\ \overline{b_n} \end{pmatrix} \in \mathcal{M}_{2n+1}(\C)$. 
        Take also $B:=\begin{pmatrix}
        b_1^{1} & \cdots  & b_n^{1}  \\
        \vdots &     &    \vdots\\
         b_1^{2n+1} & \cdots  & b_n^{2n+1}
        \end{pmatrix}$. To prove that $\Gamma$ is a subgroup, we make use of the easily verifiable equality $MB = BR$ (see \cite[Lemma 2.2]{pajitnov1}).

        Indeed, for $\gamma(m_1, W_1), \gamma(m_2, W_2) \in \Gamma$,
        \[\
        \gamma(m_1, W_1) \cdot \gamma(m_2, W_2) = 
            \begin{pmatrix}
                \alpha^{m_1 + m_2} & 0 & 0 & \ \\
                0 & (R^{m_1 + m_2})^T & 0 & Y \\
                0 & 0 & \overline{(R^{m_1 + m_2})^T} & \ \\
                0 & 0 & 0 & 1
            \end{pmatrix},
        \]
        where 
        \begin{equation*}
            \begin{split}
                Y &= \begin{pmatrix}
                        \alpha^{m_1} & 0 & 0 \\
                        0 & (R^{m_1})^T & 0 \\
                        0 & 0 & \overline{(R^{m_1})}^T 
                    \end{pmatrix} \cdot PW_1 + P \cdot W_2 \\ &= 
                    \begin{pmatrix}
                        M^{m_1} \cdot a \\ (R^{m_1})^T \cdot B^T \\ \overline{(R^{m_1})^T \cdot B^T}
                    \end{pmatrix} \cdot W_1 + P \cdot W_2 =
                    \begin{pmatrix}
                        a^T \cdot (M^{m_1})^T \\ B^T \cdot (M^{m_1})^T \\ \overline{B^T \cdot (M^{m_1})^T}
                    \end{pmatrix} \cdot W_1 + P \cdot W_2 \\
                    &= P\cdot M^{m_1} \cdot W_1 + P \cdot W_2 = P \cdot (M^{m_1} W_1 + W_2),
            \end{split}
        \end{equation*}
        so 
        \begin{equation}
            \label{legegrupgamma}
            \gamma(m_1, W_1) \cdot \gamma(m_2, W_2) = \gamma(m_1 + m_2, M^{m_1} W_1 + W_2)
        \end{equation}
        As $M$ has integer coefficients, $M^{m_1} W_1 + W_2 \in \Z^{2n + 1}$ as required.

        Additionally, there is a natural correspondence $\Phi: G_M \to \Gamma$, where $G_M = \langle g_0, g_1, ..., g_{2n + 1} \rangle$ is the group of automorphisms of $\H \times \C^n$ that defines $T_M$, given by $\Phi (g_0) = \gamma(1, 0)$ and $\Phi (g_i) = \gamma(0, f_i^T)$, where $f_i$ is just the $i$th element of the canonical basis of $\Z^{2n+1}$. The reader can easily check that $\Phi$ is a group isomorphism using \eqref{legegrupgamma}.
        
        Finally, we have a biholomorhism $T_M \simeq \mfaktor{\Gamma}{G}$ and thus, a solvmanifold structure, since one can check that $\varphi: \H \times \C^n \to G$ defined above is equivariant with respect to the actions of $G_M$ and $\Gamma$ respectively (identified via $\Phi$). %Indeed, one can check that for any $(x + i \alpha^t, z) \in \H \times \C^n$, the following hold
       % \begin{equation*}
       %     \begin{split}
       %         \varphi (g_0 (x + i \alpha^t, z)) &=  \gamma(1, 0) \cdot A(x, t, z), \\
        %        \varphi (g_j (x + i \alpha^t, z)) &=  \gamma(0, f_j^T) \cdot A(x, t, z), \ \forall 1 \leq j \leq 2n+1.
        %    \end{split}
       % \end{equation*}

       % Indeed, 
       % \begin{equation*}
       %     \begin{split}
        %        \varphi (g_0 (x + i \alpha^t, z)) &= \varphi (\alpha x + i \alpha^{t + 1}, R^T z) = A(\alpha x, t + 1, R^T z) \\
       %         &= A(0, 1, 0) \cdot A(x, t, z) = \gamma(1, 0) \cdot A(x, t, z)
       %     \end{split}
      %  \end{equation*}
       % and
      %  \begin{equation*}
      %      \begin{split}
       %         \varphi (g_j (x + i \alpha^t, z)) &= \varphi ((x + i \alpha^t, z) + u_j ) = A(x + a_j, t, z + (b_j^1, ..., b_j^{2n+1})) \\
         %       &= A(a_j, 0, (b_j^1, ..., b_j^{2n+1})) \cdot A(x, t, z) = \gamma(0, f_j^T) \cdot A(\alpha x, t, z).
        %    \end{split}
      %  \end{equation*}

        As for the structure equations of $\mathfrak{g}$, we use the variation of the $2n + 2$ real parameters of $G$ to obtain the generators:
        \begin{equation*}
            \begin{split}
                A &:= \frac{d}{ds}_{|s = 0} A(0, s, 0) = 
                \begin{pmatrix}
                    \log \alpha & 0 & 0  & 0  \\
                     0 & \log R^T & 0 &    0\\
                     0 & 0 & \overline{\log R^T} & 0\\
                     0 & 0 & 0  & 0
                \end{pmatrix}, \\
                X &:= \frac{d}{ds}_{|s = 0} A(s, 0, 0) = 
                \begin{pmatrix}
                     0 & 0 & 0 & 1  \\
                     0 & 0 & 0 & 0\\
                     0 & 0 & 0 & 0\\
                     0 & 0 & 0 & 0
                \end{pmatrix}, \\
                Y_j &:= \frac{d}{ds}_{|s = 0} A(0, 0, s\cdot e_j) = 
                \begin{pmatrix}
                     0 & 0 & 0 & 0  \\
                     0 & 0 & 0 & e_j^T \\
                     0 & 0 & 0 & e_j^T \\
                     0 & 0 & 0 & 0
                \end{pmatrix}, 
                \ 1 \leq j \leq n, \\
                Y_{n + j} &:= \frac{d}{ds}_{|s = 0} A(0, 0, s\cdot ie_j) = 
                \begin{pmatrix}
                     0 & 0 & 0 & 0  \\
                     0 & 0 & 0 & ie_j^T \\
                     0 & 0 & 0 & -ie_j^T \\
                     0 & 0 & 0 & 0
                \end{pmatrix}, \ 1 \leq j \leq n, \\
            \end{split}
        \end{equation*}

        where $\{ e_1, ..., e_n \} $ is the canonical basis of $\C^n$. The structure equations \eqref{ecstructura} follow immediately.       
        \end{proof}

        \begin{remark}
            From the description above, it follows by \cite[Theorem 1.2]{kasuya} that, in the case where $M$ is diagonalizable, $T_M$ is a formal manifold (see also \cite[Example 5]{kasuya}), in the sense of Sullivan.
        \end{remark}
        
        \section{de Rham cohomology of $T_M$}
        \label{sec:coomologie}

        In order to compute the Betti numbers of $T_M$, we crucially use the fact that it is a mapping torus (\cite[Proposition 2.9]{pajitnov1}):
        
        \begin{proposition}
            $T_M$ is diffeomorphic to the mapping torus of $\mathbb{T}^{2n+1}$ with the gluing map $M^T : \mathbb{T}^{2n+1} \to \mathbb{T}^{2n+1}$.
        \end{proposition}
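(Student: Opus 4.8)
The plan is to build $T_M$ as a two-stage quotient, separating the $2n+1$ translational directions from the single ``scaling'' direction carried by $g_0$. Writing a point of $\H\times\C^n$ as $(w,z)$ with $w=x+iy$, $y>0$, I first observe that the subgroup $N:=\langle g_1,\dots,g_{2n+1}\rangle\cong\Z^{2n+1}$ acts only on the horizontal coordinates $(x,z)\in\R\times\C^n\cong\R^{2n+1}$, translating by the lattice $L:=\langle u_1,\dots,u_{2n+1}\rangle$ and leaving $y=\Im w$ fixed. Since the $u_i$ are $\R$-linearly independent, $L$ is a genuine lattice of full rank $2n+1$ and each slice at fixed $y$ descends to a torus; as $L$ does not depend on $y$, the bundle is trivial and $N\backslash(\H\times\C^n)\cong\mathbb{T}^{2n+1}\times\R_{>0}$. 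Applying $\log_\alpha$ to the last factor replaces $\R_{>0}$ by $\R$.

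Next I would quotient by the residual copy of $\Z$ generated by $g_0$. This is legitimate because $N\trianglelefteq G_M$: conjugation by $g_0$ sends a translation to a translation (this is exactly the relation $W\mapsto MW$ behind \eqref{legegrupgamma}), so $N$ is normal and the $g_0$-action descends to $\mathbb{T}^{2n+1}\times\R$. On the $\R$-factor, $g_0$ acts by $y\mapsto\alpha y$, i.e.\ $t\mapsto t+1$; on the torus factor it is induced by the $\R$-linear map $\Psi\colon\R\times\C^n\to\R\times\C^n$, $\Psi(x,z)=(\alpha x,R^Tz)$. Hence $T_M$ is the quotient of $\mathbb{T}^{2n+1}\times\R$ by the $\Z$-action generated by $(p,t)\mapsto(\bar\Psi(p),t+1)$, where $\bar\Psi$ is the self-map of $\mathbb{T}^{2n+1}$ induced by $\Psi$. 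By definition this is the mapping torus of $\bar\Psi$.

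It remains to identify $\bar\Psi$ with $M^T$, and this is the only step with real content — it is where the transpose surfaces and where the two structural identities of the construction must be used together. Let $u\colon\R^{2n+1}\to\R\times\C^n$, $u(W)=(a^TW,\,b_1^TW,\dots,b_n^TW)=(a^TW,\,B^TW)$, be the $\R$-linear isomorphism carrying $\Z^{2n+1}$ onto $L$ (so $u(f_i)=u_i$). I would compute $\Psi(u(W))$ coordinatewise. For the real coordinate, the eigenvector relation $Ma=\alpha a$ gives $\alpha\,a^TW=(Ma)^TW=a^TM^TW$; for the $\C^n$-coordinate, the relation $MB=BR$ of \cite[Lemma 2.2]{pajitnov1} gives $R^TB^TW=(BR)^TW=(MB)^TW=B^TM^TW$. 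Therefore $\Psi\circ u=u\circ M^T$. In particular $\Psi(L)=L$, so $\bar\Psi$ is well defined, and under the induced diffeomorphism $\bar u\colon\mathbb{T}^{2n+1}\xrightarrow{\sim}(\R\times\C^n)/L$ the map $\bar\Psi$ is conjugate to $M^T\colon\mathbb{T}^{2n+1}\to\mathbb{T}^{2n+1}$. Since conjugate gluing maps produce diffeomorphic mapping tori, $T_M$ is diffeomorphic to the mapping torus of $M^T$, as claimed.

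The main obstacle is precisely this last identification: a direct change of basis only tells us that $\bar\Psi$ is represented in the $\{u_i\}$-frame by the matrix of $\Psi$, and it takes the simultaneous use of $Ma=\alpha a$ and $MB=BR$ to see that this matrix is $M^T$ rather than $M$ or some unrelated conjugate. Everything else — normality of $N$, triviality of the slice bundle, and the passage to $\R$ via $\log_\alpha$ — is routine.
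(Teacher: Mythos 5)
Your argument is correct and complete, but note that the paper does not actually prove this proposition: it imports it from \cite[Proposition 2.9]{pajitnov1}, so there is no internal proof to compare against. Your two-stage quotient --- first by the translation subgroup $N=\langle g_1,\dots,g_{2n+1}\rangle$, yielding $\mathbb{T}^{2n+1}\times\R_{>0}$, then by the residual $\Z$-action of $g_0$ --- is the natural route, and your key identity $\Psi\circ u=u\circ M^T$ (derived from $Ma=\alpha a$ and $MB=BR$) is precisely the relation $\diag\bigl(\alpha^{m},(R^{m})^T,(\overline{R^{m}})^T\bigr)\cdot P = P\,(M^{m})^T$ that drives the paper's computation of the group law \eqref{legegrupgamma} inside the proof of \ref{th:solv}. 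In this sense your proof and the paper's solvmanifold construction are two presentations of the same mechanism, and yours has the merit of making the appearance of the transpose completely explicit.

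One inconsistency you should repair. Your parenthetical claim that conjugation by $g_0$ acts on $N\cong\Z^{2n+1}$ as $W\mapsto MW$, ``exactly the relation behind \eqref{legegrupgamma}'', contradicts your own computation two sentences later: since $g_0\tau_v g_0^{-1}=\tau_{\Psi(v)}$ for any translation $\tau_v$, the conjugation action on the lattice \emph{is} the gluing map, and by $\Psi\circ u=u\circ M^T$ it is $W\mapsto M^TW$, not $W\mapsto MW$. The discrepancy is inherited from the paper: the law \eqref{legegrupgamma} as printed contains a transpose slip, since the displayed chain of equalities in the proof of \ref{th:solv} produces $P\cdot(M^{m_1})^T\cdot W_1$, which was then rewritten as $P\cdot M^{m_1}\cdot W_1$ (valid only for symmetric $M$). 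This does not damage your proof: normality of $N$ needs only that conjugation preserve $\Z^{2n+1}$, which holds for $M$ and $M^T$ alike because $M\in\mathrm{SL}(2n+1,\Z)$, and your identification of the gluing map rests on the direct computation rather than on the parenthetical. But as written the parenthetical would force the gluing map to be induced by $M$ rather than $M^T$, so it cannot stand alongside the rest and should simply be replaced by the correct statement $W\mapsto M^TW$.
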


        \bigskip

        We employ the following notation in what follows:

        \begin{definition}
            If $V$ is a vector space and $f: V \to V$ is linear, we denote by $f^{\wedge k} : \wedge^k V \to \wedge^k V$ the $k$-exterior power of $f$ \ie 
            \[
            f^{\wedge k} (v_1 \wedge ... \wedge v_k) = f(v_1) \wedge ... \wedge f(v_k).
            \]
        \end{definition}

        The following fact is straightforward:
        
        \begin{remark}
            \label{valpropriiprodusexterior}
            If $\dim V = n$ and $\lambda_1, ..., \lambda_n$ are the eigenvalues of $f$, then the eigenvalues of $f^{\wedge k}$ are $\lambda_{i_1} \lambda_{i_2} ...\lambda_{i_k}$ for all $1 \le i_1 < i_2 < ... < i_k \le n$.
        \end{remark}

        \bigskip

        It turns out that the easiest way to give the Betti numbers of $T_M$ is to express them in terms of the eigenvalues of the exterior powers of $M$:
        
        \begin{theorem}
        \label{nrbetti}
            For any $1 \le k \le 2n + 1$, $h^k(T_M) = g_{k - 1} + g_k$, where $g_k$ is the geometric multiplicity of $1$ as an eigenvalue of $M^{\wedge k}$. 

            In particular:
            \begin{itemize}
                \item $h^1(T_M) = 1$, recovering \cite[Lemma 3.1]{pajitnov1}.
                \item $h^k(T_M) = 0$, $\forall \ 1 < k < 2n + 1$, for a generic $M$, \ie if no product of some, but not all eigenvalues of $M$ is $1$.
            \end{itemize}
        \end{theorem}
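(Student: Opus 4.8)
The plan is to exploit the mapping torus structure through the Wang exact sequence. Writing $F = \mathbb{T}^{2n+1}$ and letting $f = M^T$ denote the gluing diffeomorphism, the fibration $F \hookrightarrow T_M \to S^1$ yields the long exact sequence
\[
\cdots \to H^{k-1}(F) \xrightarrow{f^* - \id} H^{k-1}(F) \to H^k(T_M) \to H^k(F) \xrightarrow{f^* - \id} H^k(F) \to \cdots
\]
in de Rham cohomology (real coefficients), where $f^*$ is the monodromy. Exactness breaks this into short exact sequences
\[
0 \to \coker\!\big(f^* - \id \mid_{H^{k-1}(F)}\big) \to H^k(T_M) \to \ker\!\big(f^* - \id \mid_{H^k(F)}\big) \to 0,
\]
so that $h^k(T_M) = \dim \coker(f^*-\id \mid_{H^{k-1}(F)}) + \dim \ker(f^*-\id \mid_{H^k(F)})$.

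First I would identify the monodromy on the cohomology of the torus. Since $f$ is induced by the integer matrix $M^T$ acting on $\R^{2n+1}/\Z^{2n+1}$, its action on $H^1(F;\R) \cong (\R^{2n+1})^*$ is the dual map, represented by $M$, and hence on $H^k(F) \cong \wedge^k H^1(F)$ it is the exterior power $M^{\wedge k}$. The one delicate point is that the monodromy is $M^{\wedge k}$ rather than $(M^{\wedge k})^T$ or its inverse; but this is harmless for the count, because transposition preserves the rank of $M^{\wedge k} - \id$, and $\det M = 1$ makes $M^{\wedge k}$ invertible, so all three have the same geometric multiplicity of the eigenvalue $1$. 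Thus $\dim \ker(f^* - \id \mid_{H^k(F)})$ equals $g_k$, the geometric multiplicity of $1$ as an eigenvalue of $M^{\wedge k}$.

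Next I would invoke the elementary fact that for any endomorphism $T$ of a finite-dimensional vector space, $\dim \coker T = \dim \ker T$. Applying this to $T = f^*-\id$ on $H^{k-1}(F)$ gives $\dim \coker(f^*-\id \mid_{H^{k-1}(F)}) = g_{k-1}$, and combining with the previous paragraph yields $h^k(T_M) = g_{k-1} + g_k$. For the two particular cases: at $k=1$ we have $g_0 = 1$ (the eigenvalue of $M^{\wedge 0} = \id$ on a line), while $g_1 = 0$ because the eigenvalues $\alpha \neq 1$ and $\beta_j, \overline{\beta_j}$ (non-real) of $M$ all differ from $1$, recovering $h^1 = 1$. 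For $1 < k < 2n+1$, both $g_{k-1}$ and $g_k$ involve $M^{\wedge j}$ with $1 \le j \le 2n$, whose eigenvalues are products $\lambda_{i_1}\cdots\lambda_{i_j}$ of some but not all of the $2n+1$ eigenvalues of $M$ by Remark \ref{valpropriiprodusexterior}; under the genericity hypothesis none of these equals $1$, so $g_{k-1}=g_k=0$ and $h^k(T_M)=0$.

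The computation is essentially formal once the monodromy is correctly identified; the only real care needed is the bookkeeping of the monodromy action and the verification that the geometric-multiplicity count is insensitive to transpose and inversion, which is exactly where I would expect a careless argument to go wrong.
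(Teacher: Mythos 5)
Your proof is correct and follows essentially the same route as the paper: the short exact sequence you extract from the Wang sequence is precisely the one the paper quotes from Hatcher's Example 2.48, both arguments identify the monodromy on $H^1(\mathbb{T}^{2n+1})$ with $M$ and on $H^k$ with $M^{\wedge k}$ via exterior powers, and both conclude with rank--nullity to convert the cokernel dimension into $g_{k-1}$. Your explicit remarks on the transpose/inverse insensitivity and on $\dim\operatorname{coker}=\dim\ker$ are points the paper leaves implicit, but they do not change the substance of the argument.
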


        \begin{proof}
            For any mapping torus $Z$ given by $f: X \to X$ and any $k$, we have the short exact sequence
            \begin{equation}
            \label{sirexactcoomologie}
            0 \longrightarrow \faktor{H^{k-1}(X)}{\Img (\Id - f_{k-1}^*)} \longrightarrow H^k(Z) \longrightarrow \Ker (\Id - f_k^*) \longrightarrow 0,          
            \end{equation}
            where $f_k^*:H^k(X) \to H^k(X)$ is the mapping induced by $f$ (see \eg \cite[Example 2.48]{hatcher}).

            We apply this for $f: \mathbb{T}^{2n + 1} \to \mathbb{T}^{2n+1}$ being the multiplication by $M^T$. We have 
            \[
            H^1 (\mathbb{T}^{2n + 1}) \simeq \bigoplus\limits_{j = 1}^{2n + 1} H^1(S^1) \text{    and    } H^k (\mathbb{T}^{2n + 1}) \simeq \bigwedge\nolimits^{k} \left( \bigoplus\limits_{j = 1}^{2n + 1} H^1(S^1) \right), 1 \le k \le {2n + 1},          
            \]
            where this is to be taken as an exterior sum \ie $H^1(S^1) \wedge H^1(S^1) = 0$ only if the two appear in the same position, because they are $1$-dimensional. Then $H^k (\mathbb{T}^{2n + 1}) \simeq \bigwedge\nolimits^{k} H^1 (\mathbb{T}^{2n + 1})$ and we only need to compute $f_1^*$, as by the properties of the pullback, $f_k^* = (f_1^*)^{\wedge k}$. But again by the definition of the pullback and the fact that $f$ is just the multiplication by $M^T$, we have that 
            \[
            f_1^* : \bigoplus\limits_{j = 1}^{2n + 1} H^1(S^1) \to \bigoplus\limits_{j = 1}^{2n + 1} H^1(S^1)
            \]
            is just the multiplication by $M$ with respect to the canonical basis of $H^1(S^1)$.
            
            It follows that $\dim \Ker (\Id - f_k^*) = g_k$ and, coming back to \eqref{sirexactcoomologie}, we have
            \[
            h^k(T_M) = \dim \Ker (\Id - f_{k-1}^*) + \dim \Ker (\Id - f_k^*) = g_{k-1} + g_k,
            \]
            for any $k \ge 1$.
        \end{proof}

        Again note that while this result tells us that all previously unknown Betti numbers are $0$ in the generic case, it also gives the precise algebraic properties that the matrix $M$ must satisfy in order for the manifold $T_M$ to have non-trivial higher cohomology. 

        \bigskip

        In the case when $M$ is diagonalizable, one can actually give left invariant representatives for the cohomology classes, which means that we have an isomorphism $H^{k}_{dR}(T_M) \simeq H^k(\mathfrak{g})$, for any $1 \leq k \leq 2n+2$.

        \begin{corollary}
            If $M$ is diagonalizable, with the notations from \ref{nrbetti}, we have
            \[
                H^k(T_M, \C) = \left( \bigoplus\limits_{\substack{1 \le i_1 < ... < i_{k-1} \le 2n+1 \\ \lambda_{i_1} \cdot ... \cdot \lambda_{i_{k-1}} = 1 }} \C \frac{d \Im w}{\Im w} \wedge e_{i_1} \wedge ... \wedge e_{i_{k-1}} \right) \bigoplus \left( \bigoplus\limits_{\substack{1 \le i_1 < ... < i_k \le 2n+1 \\ \lambda_{i_1} \cdot ... \cdot \lambda_{i_k} = 1 }} \C e_{i_1} \wedge ... \wedge e_{i_k} \right),
            \]
        \end{corollary}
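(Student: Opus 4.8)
The plan is to compute everything through left-invariant forms. Since $M$ is diagonalizable we may invoke the isomorphism $H^\bullet_{dR}(T_M;\C)\cong H^\bullet(\mathfrak{g};\C)$ recorded just above, which reduces the statement to computing the Chevalley--Eilenberg cohomology of $\mathfrak{g}$ and exhibiting explicit generators. First I would pass to the dual of the structure equations \eqref{ecstructura}: let $\alpha^0$ be the invariant $1$-form dual to $A$, and observe that $\frac{d\Im w}{\Im w}=\log\alpha\cdot\alpha^0$ (since $\Im w=\alpha^t$), so that $\alpha^0$, and hence $\frac{d\Im w}{\Im w}$, is closed because $A$ appears in no bracket on the right-hand side of \eqref{ecstructura}.

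Next I would diagonalize the adjoint action. Complexifying, the vectors $Z_j:=Y_j-\sqrt{-1}\,Y_{n+j}$ and $\overline{Z_j}$ are eigenvectors of $\operatorname{ad}_A$ with eigenvalues $\log\beta_j$ and $\log\overline{\beta_j}$, while $X$ has eigenvalue $\log\alpha$; dually this yields invariant $1$-forms $e_1,\dots,e_{2n+1}$, one for each eigenvalue $\lambda_i$ of $M$, satisfying $de_i=-\log\lambda_i\,\alpha^0\wedge e_i$. A Leibniz computation then gives, for any multi-index,
\[ d(\alpha^0\wedge e_{i_1}\wedge\cdots\wedge e_{i_{k-1}})=0, \qquad d(e_{i_1}\wedge\cdots\wedge e_{i_k})=-\Big(\textstyle\sum_l \log\lambda_{i_l}\Big)\,\alpha^0\wedge e_{i_1}\wedge\cdots\wedge e_{i_k}, \]
the first vanishing because $\alpha^0\wedge\alpha^0=0$.

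From here the cohomology reads off directly. Every $\alpha^0\wedge e_I$ is closed, and it is exact precisely when $\sum_{i\in I}\log\lambda_i\neq0$ (being then a multiple of $d(e_I)$); each $e_J$ is closed precisely when $\sum_{j\in J}\log\lambda_j=0$ and is never exact, since the image of $d$ in degree $k$ consists only of forms divisible by $\alpha^0$. Hence $H^k(\mathfrak{g};\C)$ is spanned by the classes of $\alpha^0\wedge e_I$ with $|I|=k-1$ and of $e_J$ with $|J|=k$, in each case subject to the vanishing of the corresponding sum of logarithms; replacing $\alpha^0$ by $\tfrac{1}{\log\alpha}\frac{d\Im w}{\Im w}$ reproduces exactly the two families of generators in the statement, after checking their $\C$-linear independence (which is immediate, as they are distinct monomials in the $e_i$).

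The one point that needs care — and which I expect to be the crux — is the passage from the closedness condition $\sum_l\log\lambda_{i_l}=0$ to the multiplicative condition $\lambda_{i_1}\cdots\lambda_{i_k}=1$ written in the statement. The implication ``$\Rightarrow$'' is automatic, but the converse amounts to excluding a nonzero winding $\sum_l\arg\lambda_{i_l}\in2\pi\Z\setminus\{0\}$, the phenomenon responsible for the failure of complete solvability induced by the non-real eigenvalues $\log\beta_j$ of $\operatorname{ad}_A$. I would settle this by a dimension count: by \ref{nrbetti} the total Betti number $h^k(T_M)$ is the number of multi-indices (of sizes $k-1$ and $k$) with $\lambda_{i_1}\cdots=1$, whereas the invariant computation above furnishes exactly one independent class per such multi-index satisfying $\sum\log\lambda=0$. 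As the latter index set is contained in the former and, by the isomorphism $H^k_{dR}\cong H^k(\mathfrak{g})$, the two produce spaces of equal dimension, they must coincide; this forces every product-one multi-index to have $\sum\log\lambda=0$, so the listed invariant forms are genuinely closed and the product formulation is legitimate. The substantive input here is precisely this invariant-cohomology isomorphism, so in carrying out the argument I would be careful to keep the whole identification contingent on it.
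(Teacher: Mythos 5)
You have put your finger on exactly the right subtlety --- the gap between $\lambda_{i_1}\cdots\lambda_{i_k}=1$ and $\sum_l\log\lambda_{i_l}=0$ --- and your Chevalley--Eilenberg computation is correct; in substance this is also how the paper arrives at the statement, which it places right after asserting $H^k_{dR}(T_M)\simeq H^k(\mathfrak{g})$ without proof. The genuine gap is in your resolution of the subtlety: that isomorphism cannot be used as input, because it fails precisely when some product-one multi-index has nonzero winding, and this really happens for diagonalizable Endo--Pajitnov matrices. Take $M\in\textrm{SL}(9,\Z)$ to be the companion matrix of $P=(X^2+X+1)(X^4+3X^3+8X^2+3X+1)(X^3+X-1)$: the quadratic factor has roots $e^{\pm 2\pi i/3}$, the quartic has roots $r^{\pm 1}e^{\pm 2\pi i/3}$ with $r+r^{-1}=3$, and the cubic has one real root $\alpha\in(0,1)$ and one conjugate pair, so all requirements of the construction hold with $n=4$ and $M$ diagonalizable. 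The triple $\{e^{2\pi i/3},\,re^{2\pi i/3},\,r^{-1}e^{2\pi i/3}\}$ has product $1$ but log-sum $2\pi i$; together with its conjugate it contributes to $g_3$ but not to $\dim H^3(\mathfrak{g})$, and an easy enumeration gives $\dim H^3(\mathfrak{g})=4$ while \ref{nrbetti} gives $h^3(T_M)=g_2+g_3=6$. So invariant cohomology is strictly smaller than de Rham cohomology for this $M$, your dimension count collapses, and in fact your argument would ``prove'' the false statement that winding never occurs: it proves too much. The contingency you flag in your last sentence is therefore fatal, not a formality --- the assertion you want to cite is itself wrong in the winding case, while the corollary survives for a different reason.

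The repair must give up left-invariance altogether, which is also the honest reading of the corollary. Interpret the representatives as coordinate forms, monomials in $\frac{d\Im w}{\Im w}$, $d\Re w$, $dz_i$, $d\overline{z}_i$ (note that with the printed normalization $e_1=\frac{d\Re w}{\Im w}$ a monomial containing $e_1$ is either non-closed or not $g_0$-invariant; replacing $e_1$ by $d\Re w$ fixes both, and this discrepancy by a power of $\Im w$ is one more sign that these classes are not your invariant generators $(\Im w)^{-\log\beta_i/\log\alpha}dz_i$, so the claim that the two families ``coincide'' also needs care). Each such monomial is closed for trivial reasons, is invariant under the translations $g_1,\dots,g_{2n+1}$, and is rescaled by $g_0$ exactly by the factor $\lambda_{i_1}\cdots\lambda_{i_k}$, so it descends to $T_M$ if and only if the product condition in the statement holds --- no logarithms enter at any point. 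Linear independence and spanning then come from the Wang sequence \eqref{sirexactcoomologie} underlying \ref{nrbetti}: the monomials without $\frac{d\Im w}{\Im w}$ restrict on the fiber torus to constant-coefficient forms whose classes form a basis of $\Ker(\Id-f_k^*)$ (diagonalizability enters here, so that geometric and algebraic multiplicities agree and $\Ker(\Id-f_{k}^*)\cap\Img(\Id-f_{k}^*)=0$), while the monomials containing $\frac{d\Im w}{\Im w}$ map isomorphically onto the quotient $H^{k-1}(\mathbb{T}^{2n+1})/\Img(\Id-f_{k-1}^*)$; the count $g_{k-1}+g_k$ closes the argument.
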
   
        where $e_1=\frac{d \Re w}{\Im w}$, $e_{i+1}=dz_i$ and $e_{i+n+1}=d\overline{z}_i$, for $1 \leq i \leq n$ and $\lambda_1=\alpha$, $\lambda_{i+1}=\beta_i$ and $\lambda_{i+n+1}=\overline{\beta}_i$, for $1 \leq i \leq n$. 

        \begin{remark}
            See also \cite[Example 7.2]{bf23} for an example of a $4$-dimensional manifold of $T_M$ type with an explicit computation of its cohomology.
        \end{remark}

        \section{Non-\K \ metrics on $T_M$}
        \label{sec:metrici}

        We investigate in this section the existence of special metrics on the $T_M$ manifolds. To this aim, we use the structure equations given in \eqref{ecstructura} to describe a $(1, 0)$-coframe for the Lie algebra $\mathfrak{g}$:

        \begin{proposition}
            There exists a $(1,0)$-coframe $\mathfrak{g}^* = \langle \eta, \theta_1, ..., \theta_n \rangle_\C$ such that
            \begin{equation}
                \label{ecstructuradiff}
                \left\{
                \begin{split}
                    d \eta &= \log \alpha \ \eta \wedge \overline{\eta}, \\
                    d \theta_k &= -\sum\limits_{j \ge k} \Delta_{kj} (\eta + \overline{\eta}) \wedge \theta_j, \ 1\leq k \leq n,
                \end{split}
                \right.
            \end{equation}
            where $\Delta = \log R^T$ is an upper-triangular matrix and $\Delta_{jj} = \log \beta_j$. 
        \end{proposition}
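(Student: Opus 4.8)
The plan is to obtain $\eta$ and the $\theta_k$ as left-invariant $(1,0)$-forms on $G$ and to deduce \eqref{ecstructuradiff} from the structure equations \eqref{ecstructura} through the Maurer--Cartan identity $d\phi(U,V) = -\phi([U,V])$ for left-invariant $1$-forms. The first task is to describe the left-invariant complex structure $J$ on $\mathfrak g$. Since the diffeomorphism $\varphi\colon \H\times\C^n\to G$ of Theorem~\ref{th:solv} carries left translation by $A(x_0,t_0,z_0)$ to the map $w\mapsto \alpha^{t_0}w+x_0$, $z\mapsto (R^{t_0})^T z+z_0$ — which is holomorphic — the complex structure is left-invariant, so it is determined by its value at the identity $w=i$, $z=0$. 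Matching the generators of $\mathfrak g$ with the coordinate fields there (so that $X = \partial_{\Re w}$, $A = \log\alpha\,\partial_{\Im w}$, $Y_k = \partial_{\Re z_k}$, $Y_{n+k} = \partial_{\Im z_k}$) yields
\[
JX = \tfrac{1}{\log\alpha}\,A,\quad JA = -\log\alpha\,X,\quad JY_k = Y_{n+k},\quad JY_{n+k} = -Y_k .
\]

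Write $\{A^*,X^*,Y_1^*,\dots,Y_{2n}^*\}$ for the coframe dual to the basis in \eqref{ecstructura} and set
\[
\theta_k := Y_k^* + i\,Y_{n+k}^*\ (1\le k\le n),\qquad \eta := \tfrac12 A^* - \tfrac{i}{2\log\alpha}\,X^*,
\]
the form $\eta$ being a constant multiple of $\frac{dw}{\Im w}$. Using the above description of $J$ one checks that each of these satisfies $\phi(JV) = i\,\phi(V)$ for all $V\in\mathfrak g$, hence is of type $(1,0)$; as they are $\C$-independent and number $n+1 = \tfrac12\dim_\R\mathfrak g$, they form a $(1,0)$-coframe. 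The identity that makes the two families of structure equations fit together is the fact that the real part of $\eta$ recovers the dual of $A$, i.e. $\eta + \overline\eta = A^*$.

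It then remains to differentiate. Because the only nonzero brackets in \eqref{ecstructura} are of the form $[A,\,\cdot\,]$, the Maurer--Cartan identity gives at once $dA^* = 0$ and $dX^* = \log\alpha\,X^*\wedge A^*$; combining $dY_k^*$ with $i\,dY_{n+k}^*$ and using that $\Delta = \log R^T$ is upper triangular collapses the real and imaginary parts into
\[
d\theta_k = -\sum_{j\ge k}\Delta_{kj}\,A^*\wedge\theta_j ,
\]
and substituting $A^* = \eta + \overline\eta$ produces the second line of \eqref{ecstructuradiff}. For the first line, $d\eta = -\tfrac{i}{2\log\alpha}\,dX^* = \tfrac{i}{2}\,A^*\wedge X^*$, while expanding $\eta\wedge\overline\eta = \tfrac{i}{2\log\alpha}\,A^*\wedge X^*$ shows $d\eta = \log\alpha\,\eta\wedge\overline\eta$. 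The only genuinely delicate point is the normalization of $\eta$: the factor $\tfrac{i}{2\log\alpha}$ is forced by the requirement that $\eta$ simultaneously satisfy $\eta+\overline\eta = A^*$ (so that $\eta+\overline\eta$ is exactly the form appearing in the $\theta_k$-equations) and produce the coefficient $\log\alpha$ in $d\eta$; any other scaling satisfies one condition at the expense of the other. The remaining manipulations are routine bracket bookkeeping.
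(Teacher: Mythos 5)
Your proof is correct and follows essentially the same route as the paper: identify the left-invariant complex structure on $\mathfrak{g}$ induced from $\H \times \C^n$, take a dual $(1,0)$-coframe, and derive \eqref{ecstructuradiff} from the structure equations \eqref{ecstructura} via the Cartan/Maurer--Cartan formula. The only difference is bookkeeping: you keep the honest identification $JX = \tfrac{1}{\log\alpha}A$ coming from $\Im w = \alpha^t$ and compensate in the normalization of $\eta$, whereas the paper works with $JX = A$ (harmless, since rescaling $X$ is a Lie algebra automorphism) and takes the coframe dual to $\{A+iX,\, Y_{n+j}+iY_j\}$ directly --- both conventions produce the same structure equations.
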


        \begin{proof}
            Remember that $G \simeq \H \times \C^n$, from which it inherits its complex structure. Namely, on $\mathfrak{g} = \langle A, X, Y_1, ..., Y_n, Y_{n+1}, ..., Y_{2n}\rangle_\R$ we have $J X = A$ and $J Y_j = Y_{n + j}$ for any $j = \overline{1, n}$. Thus,
            \[
            \mathfrak{g} = \langle A + i X, Y_{n + 1} + i Y_1, ..., Y_{2n} + i Y_n \rangle_\C
            \]
            is a basis of $(1, 0)$-vectors. Now choose the $(1,0)$-coframe given by the dual basis: $\mathfrak{g}^* = \langle \eta, \theta_1, ..., \theta_n \rangle_\C$. We want to express $d\eta, d\theta_1, ..., d\theta_n$ with respect to this coframe.

            For any $(1, 0)$-form $\sigma \in \mathfrak{g}^*$, using the Cartan formula $d\sigma(U, V) = -\sigma([U, V])$, we have
            \begin{equation*}
                \begin{split}
                    d\sigma = &- \sigma([A + iX, A - iX]) \cdot \eta \wedge \overline{\eta} - \sum\limits_{j = 1}^n \sigma([A + iX, Y_{n + j} + iY_j]) \cdot \eta \wedge \theta_j \\ 
                      &- \sum\limits_{j = 1}^n \sigma([A + iX, Y_{n + j} - iY_j]) \cdot \eta \wedge \overline{\theta_j} - \sum\limits_{j = 1}^n \sigma([A - iX, Y_{n + j} + iY_j]) \cdot \overline{\eta} \wedge \theta_j \\
                      &- \sum\limits_{j, l = 1}^n \sigma([Y_{n + j} + iY_j, Y_{n + l} + i Y_l]) \cdot \theta_j \wedge \theta_l - \sum\limits_{j, l = 1}^n \sigma([Y_{n + j} + iY_j, Y_{n + l} - i Y_l]) \cdot \theta_j \wedge \overline{\theta_l}.
                \end{split}
            \end{equation*}
            Note that the last two sums vanish for any $\sigma$ by \eqref{ecstructura}.
            
            If $\sigma = \eta$, all terms except the first one vanish by \eqref{ecstructura} and the definition of $\eta$, and 
            \[
            \eta([A + iX, A - iX]) = \eta(-2i\log \alpha X) = -\log \alpha \ \eta ( (A + iX) - (A - iX))) = -\log \alpha,
            \]
            so indeed, $d\eta = \log \alpha \ \eta \wedge \overline{\eta}$.

            On the other hand, in the expression of $d\theta_k$ the term containing $\eta \wedge \overline{\eta}$ vanishes and
            \begin{equation*}
                \begin{split}
                    \theta_k([A \pm iX, Y_{n + j} + i Y_j]) &= \theta_k([A, Y_{n + j} + i Y_j]) \\ 
                    &= \theta_k \left( -\sum\limits_{l \le j} \Im \Delta_{lj} Y_l + \sum\limits_{l \le j} \Re \Delta_{lj} Y_{n + l} + i \sum\limits_{l \le j} \Re \Delta_{lj} Y_l + i\sum\limits_{l \le j} \Im \Delta_{lj} Y_{n + l} \right) \\
                    &= \theta_k \left( \sum\limits_{l \le j} \Re \Delta_{lj} (Y_{n + l} + i Y_l) + \sum\limits_{l \le j} i\Im \Delta_{lj} (Y_{n + l} + i Y_l) \right) \\
                    &= \sum\limits_{l \le j} \Delta_{lj} \theta_k (Y_{n + l} + i Y_l) = \Delta_{kj}
                \end{split}
            \end{equation*}
            while 
            \begin{equation*}
                \begin{split}
                    \theta_k([A \pm iX, Y_{n + j} - i Y_j]) &= \theta_k([A, Y_{n + j} - i Y_j]) \\ 
                    &= \theta_k \left( -\sum\limits_{l \le j} \Im \Delta_{lj} Y_l + \sum\limits_{l \le j} \Re \Delta_{lj} Y_{n + l} - i \sum\limits_{l \le j} \Re \Delta_{lj} Y_l - i\sum\limits_{l \le j} \Im \Delta_{lj} Y_{n + l} \right) \\
                    &= \theta_k \left( \sum\limits_{l \le j} \Re \Delta_{lj} (Y_{n + l} - i Y_l) - \sum\limits_{l \le j} i\Im \Delta_{lj} (Y_{n + l} - i Y_l) \right) \\
                    &= \sum\limits_{l \le j} \overline{\Delta_{lj}} \theta_k (Y_{n + l} - i Y_l) = 0
                \end{split}
            \end{equation*}
            In the end, we get $d\theta_k = -\sum\limits_{j \ge k} \Delta_{kj} (\eta + \overline{\eta}) \wedge \theta_j$ as stated.            
        \end{proof}

        We also record separately the special case where $M$ is diagonal:

        \begin{corollary}   
            If the matrix $M$ is diagonal, then there is a $(1,0)$-coframe $\mathfrak{g}^* = \langle \eta, \theta_1, ..., \theta_n \rangle_\C$ such that
            \begin{equation}
                \label{ecstructuradiffdiag}
                \left\{
                \begin{split}
                    d \eta &= \log \alpha \ \eta \wedge \overline{\eta}, \\
                    d \theta_k &= -\log \beta_k \ (\eta + \overline{\eta}) \wedge \theta_k, \ \forall k = \overline{1, n}.
                \end{split}
                \right.
            \end{equation}
        \end{corollary}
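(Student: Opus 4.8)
The plan is to obtain this as an immediate specialization of the preceding Proposition, the only genuine point being to check that the hypothesis forces the matrix $\Delta = \log R^T$ to be diagonal. First I would invoke the basis freedom recorded in Remark \ref{rmk:nudepdeformaJordan}: since $M$ is diagonalizable, each generalized eigenspace $W(\beta_j)$ is a genuine eigenspace, so I can choose the basis $\{b_1, \ldots, b_n\}$ of $W$ to consist of eigenvectors of $f_M$, say with $f_M(b_k) = \beta_k b_k$. With respect to such a basis the matrix $R$ of $f_M$ is $\diag(\beta_1, \ldots, \beta_n)$; hence $R^T$ is the same diagonal matrix and $\Delta = \log R^T = \diag(\log\beta_1, \ldots, \log\beta_n)$, so that $\Delta_{kk} = \log\beta_k$ and $\Delta_{kj} = 0$ whenever $k \neq j$.

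Next I would substitute this $\Delta$ into the structure equations \eqref{ecstructuradiff}. The equation $d\eta = \log\alpha\,\eta \wedge \overline{\eta}$ does not involve $\Delta$ and is therefore unchanged. In the formula $d\theta_k = -\sum_{j \ge k} \Delta_{kj}(\eta + \overline{\eta}) \wedge \theta_j$, every coefficient with $j > k$ now vanishes, so the sum collapses to its single $j = k$ term, leaving exactly $d\theta_k = -\log\beta_k\,(\eta + \overline{\eta}) \wedge \theta_k$, as claimed.

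Since the corollary is a pure specialization, there is no real analytic obstacle; the only step deserving attention is the very first one, namely the legitimacy of arranging $R$ to be diagonal. This is harmless because the Proposition was derived for an arbitrary admissible choice of basis (in the proof of Theorem \ref{th:solv} the basis was taken so as to put $R^T$ into Jordan form), and for diagonalizable $M$ that Jordan form is already diagonal. Thus nothing beyond reading off the diagonal entries $\Delta_{kk} = \log\beta_k$ is required to conclude.
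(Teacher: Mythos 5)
Your proof is correct and matches the paper's (implicit) argument: the paper states this as an immediate corollary of the preceding Proposition, the point being exactly that for diagonalizable $M$ the Jordan-form basis chosen in Theorem \ref{th:solv} makes $R^T$, hence $\Delta = \log R^T$, diagonal with $\Delta_{kk} = \log \beta_k$, collapsing the sum in \eqref{ecstructuradiff} to its $j = k$ term. Your extra care in justifying the basis choice via Remark \ref{rmk:nudepdeformaJordan} is exactly the right reading of the hypothesis ``$M$ is diagonal,'' which the paper uses interchangeably with ``diagonalizable.''
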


\bigskip

We will make heavy use of these structural equations to study the existence of special metrics, starting with balanced and locally conformally balanced metrics, for which the answers do not depend on the algebraic properties of the matrix $M$.
        
\begin{proposition} Endo-Pajitnov manifolds do not admit balanced metrics.
\end{proposition}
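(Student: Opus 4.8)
The plan is to derive a contradiction directly from the structure equation for the $(1,0)$-form $\eta$, with no need for symmetrization. The key observation is that $\eta$, being an element of the left-invariant coframe $\mathfrak{g}^*$, descends to a globally defined, nowhere-vanishing $(1,0)$-form on $T_M \simeq \mfaktor{\Gamma}{G}$ (left-invariant forms on $G$ are in particular invariant under the left $\Gamma$-action), and by \eqref{ecstructuradiff} it satisfies $d\eta = \log\alpha\,\eta\wedge\overline{\eta}$ globally on $T_M$. I would combine this single structure equation with Stokes' theorem; notably, the resulting argument applies to an \emph{arbitrary} Hermitian metric, not merely an invariant one.

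Recall that $T_M$ has complex dimension $n+1$, so a Hermitian metric with fundamental form $\omega$ is balanced precisely when $d\omega^{n}=0$. Assuming for contradiction that such an $\omega$ exists, I would consider the $(n+1,n)$-form $\eta\wedge\omega^{n}$ and compute
\[
d(\eta\wedge\omega^{n}) = d\eta\wedge\omega^{n} - \eta\wedge d\omega^{n} = \log\alpha\,\eta\wedge\overline{\eta}\wedge\omega^{n},
\]
where the second term is killed by the balanced hypothesis. Since $T_M$ is compact without boundary, integrating and applying Stokes' theorem forces the left-hand side to be $0$.

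The final step is to show the right-hand side is nonzero. Because $\eta$ is a nowhere-vanishing $(1,0)$-form belonging to a coframe and $\omega$ is positive definite, the top-degree form $(i\,\eta\wedge\overline{\eta})\wedge\omega^{n}$ is a strictly positive volume form (pointwise, choose an $\omega$-unitary coframe with first covector parallel to $\eta$). Hence $\int_{T_M}\eta\wedge\overline{\eta}\wedge\omega^{n} = -i\int_{T_M}(i\,\eta\wedge\overline{\eta})\wedge\omega^{n}$ is a nonzero (purely imaginary) number, and since $\alpha\neq 1$ gives $\log\alpha\neq 0$, we obtain $\int_{T_M} d(\eta\wedge\omega^{n})\neq 0$, contradicting Stokes' theorem.

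I do not anticipate a genuine obstacle: the only points requiring care are that $\eta$ descends to a global form on $T_M$ and that $\eta\wedge\overline{\eta}\wedge\omega^{n}$ is a nonzero multiple of the volume form, both of which are routine. It is worth emphasizing that the argument uses only $\alpha\neq 1$ together with the equation $d\eta=\log\alpha\,\eta\wedge\overline{\eta}$; in particular it is insensitive to the arithmetic of the eigenvalues of $M$, consistent with the fact that balanced metrics fail to exist on every $T_M$.
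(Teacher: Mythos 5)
Your proof is correct, and it takes a genuinely different route from the paper's. You use the invariant coframe and the structure equation \eqref{ecstructuradiff}: since $i\,d\eta = \log\alpha\,(i\,\eta\wedge\overline{\eta})$ is an exact, semi-positive, nowhere-zero $(1,1)$-form on $T_M$ (left-invariant forms do descend to $\Gamma\backslash G$, and your pointwise positivity computation for $(i\,\eta\wedge\overline{\eta})\wedge\omega^n$ is right), wedging with $\omega^n$ and applying Stokes rules out \emph{any} balanced metric, invariant or not, in one stroke; this is essentially the classical Michelsohn-type obstruction (an exact positive $(1,1)$-current precludes balanced metrics). The paper argues quite differently and never uses the structure equations here: it pulls $\Omega^n$ back to the intermediate cover $\overline{T_M}\simeq\mathbb{T}^{2n+1}\times\R_+$, averages over the torus action to produce a closed, positive, $G_M$-invariant $(n,n)$-form constant in $z_1,\dots,z_n$, and then a splitting argument shows the coefficient function is constant and forces $|\det R|=1$, contradicting $\alpha\,|\det R|^2=\det M=1$ with $\alpha\neq 1$. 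Both proofs ultimately rest only on $\alpha\neq 1$, as you note. Your route is shorter and dispenses with averaging entirely, since integration against $\eta\wedge\overline{\eta}$ imposes no invariance on $\omega$; the paper's averaging argument, on the other hand, is the template it reuses for the lcK, pluriclosed and astheno-K\"ahler results, where the relevant condition is not captured by a single Stokes obstruction. One logical point worth checking, which does work out: your argument needs the solvmanifold structure and the coframe proposition, and both are established in the paper before this proposition, so the order of dependence is sound.
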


\begin{proof} Assume $\Omega$ is a balanced metric on $T_M$. The $(n, n)$-form $\Omega^{n}$ is then strictly positive and closed and moreover, $\Omega':=\pi^*\Omega^{n}$ shares the same properties on the intermediate cover $\overline{T_M}=\mathbb{H} \times \mathbb{C}^{n}/ \langle g_1, \ldots, g_n \rangle$. Since $\overline{T_M}$ is diffeomorphic to $\mathbb{T}^{2n+1} \times \mathbb{R}_{+}$, we can define the average of $\Omega'$ with respect to the torus action by
\begin{equation*}
\tilde{\Omega}:=\int_{\mathbb{T}^{2n+1}} a^*\Omega'd\mu (a),
\end{equation*}
where $\mu$ is the constant volume form on $\mathbb{T}^{2n+1}$ given by $\int_{\mathbb{T}^{2n+1}} d\mu =1$. The form $\tilde{\Omega}$ is thus constant in the variables ${z_1, \ldots, z_n}$ and moreover, $G_M$-invariant. Indeed, using the invariance of $\Omega'$, we get
\begin{align*}
    g_0^*\tilde\Omega&=\int_{\mathbb{T}^{2n+1}} (ag_0)^*\Omega'd\mu (a)=\int_{\mathbb{T}^{2n+1}} (g_0c_{g_0}(a))^*\Omega'd\mu (a)\\ &=\int_{\mathbb{T}^{2n+1}} c_{g_0}(a)^*\Omega'd\mu (c_{g_0}(a))=\tilde\Omega.
\end{align*}
By lifting now $\tilde{\Omega}$ to $\mathbb{H} \times \mathbb{C}^{n}$ we get a strictly positive, closed, $G_M$-invariant $(n, n)$-form, that is constant in the variables $\{z_1, \ldots, z_n\}$. This means we can split $\tilde{\Omega}$ as a sum
\begin{equation*}
    \tilde{\Omega}=f(w)\mathrm{i} dz_1 \wedge \ldots \wedge dz_n \wedge d\overline{z_1} \wedge \ldots \wedge d\overline{z_n}+ \tilde{\Omega}_0,
\end{equation*}
where $i_{\frac{\partial}{\partial w}}\Omega_0 \neq 0$, $i_{\frac{\partial}{\partial \overline{w}}}\Omega_0 \neq 0$. Notice that $i_{\frac{\partial}{\partial z_1}}\ldots i_{\frac{\partial}{\partial z_n}}\tilde{\Omega}_0 = 0$ and since $\tilde{\Omega}_0$ is constant in the variables $\{z_1, \ldots, z_n\}$, we deduce $i_{\frac{\partial}{\partial z_1}}\ldots i_{\frac{\partial}{\partial z_n}}d\tilde{\Omega}_0=0$. Moreover, by the positivity of $\tilde{\Omega}$, $f>0$.  However, $d\tilde{\Omega}=0$, which gives
\begin{equation*}
i_{\frac{\partial}{\partial z_1}}\ldots i_{\frac{\partial}{\partial z_n}} \left(\frac{\partial f}{\partial w} dw + \frac{\partial f}{\partial \overline{w}} d\overline{w} \right) \wedge dz \wedge d\overline{z}=0, 
\end{equation*}
where $dz:=dz_1 \wedge \ldots \wedge dz_n$, and further implies that $f$ is constant. Since $\tilde{\Omega}$ is $g_0$-invariant and $i_{\frac{\partial}{\partial z_1}}\ldots i_{\frac{\partial}{\partial z_n}}g_0^*\tilde{\Omega}_0=0$, we infer $g_0^*dz \wedge d\overline{z}=dz \wedge d\overline{z}$ and hence, $|\mathrm{det} R|=1.$ This is impossible, however, since $\alpha \neq 1$ and therefore, no balanced metric can exist on $T_M$.
\end{proof}

\medskip

However, for any matrix $M$, the manifold $T_M$ admits locally conformally balanced metrics:

\begin{proposition}
    With the notations from \eqref{ecstructuradiff}, the metric
    \[
    \omega = i \left( \eta \wedge \overline{\eta} + \theta_1 \wedge \overline{\theta_1} + ... + \theta_n \wedge \overline{\theta_n} \right)  
    \]
    is lcb.
\end{proposition}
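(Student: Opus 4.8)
The manifold $T_M$ has complex dimension $n+1$, so the goal is to produce a closed $1$-form $\theta$ satisfying $d\omega^n = \theta \wedge \omega^n$. The plan is to guess $\theta = c(\eta + \overline{\eta})$ for a suitable real constant $c$ and verify the identity directly from the structure equations \eqref{ecstructuradiff}. First I would record the elementary facts that drive everything. From $d\eta = \log\alpha\,\eta \wedge \overline{\eta}$ and its conjugate $d\overline{\eta} = -\log\alpha\,\eta \wedge \overline{\eta}$ one reads off that $\eta + \overline{\eta}$ is closed, that the $(1,1)$-form $E := i\,\eta \wedge \overline{\eta}$ is closed, and crucially that $(\eta + \overline{\eta}) \wedge E = 0$. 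Writing $\Theta_k := i\,\theta_k \wedge \overline{\theta_k}$, so that $\omega = E + \sum_k \Theta_k$, the structure equations also show that each $d\Theta_k$ carries $(\eta + \overline{\eta})$ as a global factor, with diagonal part $d\Theta_k \equiv -2\log|\beta_k|\,(\eta+\overline{\eta}) \wedge \Theta_k$ up to terms involving $\theta_j \wedge \overline{\theta_k}$ or $\theta_k \wedge \overline{\theta_j}$ with $j > k$ that come from the off-diagonal entries $\Delta_{kj}$.

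Next I would expand $\omega^n = n!\big(P_0 + \sum_{k=1}^n P_k\big)$, where $P_0 = \Theta_1 \wedge \cdots \wedge \Theta_n$ omits $E$ and $P_k = E \wedge \Theta_1 \wedge \cdots \widehat{\Theta_k} \cdots \wedge \Theta_n$ omits $\Theta_k$; since the square of any single $(1,1)$-factor vanishes, these are the only surviving monomials. The computation then splits into two parts. For $k \ge 1$ I expect $dP_k = 0$: indeed $P_k$ carries the factor $E$ while $dE = 0$, and every term of $d(\Theta_1 \wedge \cdots \widehat{\Theta_k} \cdots)$ carries a factor $(\eta + \overline{\eta})$, so each resulting term contains $E \wedge (\eta + \overline{\eta}) = 0$. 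For $P_0$ I would compute $dP_0 = -2\big(\sum_k \log|\beta_k|\big)(\eta + \overline{\eta}) \wedge P_0$, where in differentiating the factor $\Theta_k$ inside $P_0$ the off-diagonal contributions die because the $1$-forms $\theta_j$ or $\overline{\theta_j}$ (for $j \ne k$) already appear in the remaining factor $\Theta_j$, leaving only the diagonal part.

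Since also $(\eta + \overline{\eta}) \wedge P_k = 0$ for $k \ge 1$ (again via $E$), these two facts combine to give $(\eta + \overline{\eta}) \wedge \omega^n = n!\,(\eta + \overline{\eta}) \wedge P_0$, whence $d\omega^n = -2\big(\sum_k \log|\beta_k|\big)(\eta + \overline{\eta}) \wedge \omega^n$. It then suffices to set $\theta := -2\big(\sum_k \log|\beta_k|\big)(\eta + \overline{\eta})$, which is closed because $\eta + \overline{\eta}$ is. Using $\det M = 1$, i.e. $\alpha \prod_j |\beta_j|^2 = 1$, this simplifies to the clean form $\theta = \log\alpha\,(\eta + \overline{\eta})$, exhibiting $\omega$ as lcb.

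The main obstacle is entirely bookkeeping rather than conceptual: controlling the off-diagonal Jordan terms of $d\Theta_k$ (present exactly when $M$ is not diagonalizable) and tracking signs when commuting the $1$-form $(\eta + \overline{\eta})$ past the even-degree factors $\Theta_l$. The conceptual content is the pair of vanishing mechanisms — repetition of a $\theta_j$ or $\overline{\theta_j}$ factor inside a product, and the identity $E \wedge (\eta + \overline{\eta}) = 0$ — which together collapse the entire computation of $d\omega^n$ onto the single monomial $P_0$.
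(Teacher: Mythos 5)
Your proof is correct and follows essentially the same route as the paper: expand $\omega^n$ into the monomial $\theta_1\wedge\overline{\theta_1}\wedge\cdots\wedge\theta_n\wedge\overline{\theta_n}$ plus the monomials containing $\eta\wedge\overline{\eta}$, differentiate via \eqref{ecstructuradiff}, and note that the off-diagonal Jordan terms die by repetition of factors while every term containing $\eta\wedge\overline{\eta}$ is killed by wedging with $\eta+\overline{\eta}$, yielding the Lee form $-2\bigl(\sum_k \log|\beta_k|\bigr)(\eta+\overline{\eta})$, which is exactly the paper's $-\sum_j(\Delta_{jj}+\overline{\Delta_{jj}})(\eta+\overline{\eta})$. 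Your closing observation that $\det M=1$ rewrites the Lee form as $\log\alpha\,(\eta+\overline{\eta})$ is a pleasant extra the paper does not record, but it is a cosmetic addition rather than a different argument.
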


\begin{proof}
    This is a straightforward computation. Indeed,
    \[
    \omega^n = i^n \left( n! \ \theta_1 \wedge \overline{\theta_1} \wedge ... \wedge \theta_n \wedge \overline{\theta_n} + (n-1)! \sum\limits_{j = 1}^n \eta \wedge \overline{\eta} \wedge \theta_1 \wedge \overline{\theta_1} \wedge ... \wedge \reallywidehat{ \theta_j \wedge \overline{\theta_j} } \wedge ... \wedge \theta_n \wedge \overline{\theta_n} \right),
    \]
    so, using \eqref{ecstructuradiff},
    \begin{equation*}
        \begin{split}
            d\omega^n &= -i^n n! \sum\limits_{j=1}^n \left(\Delta_{jj} + \overline{\Delta_{jj}} \right) (\eta + \overline{\eta}) \wedge \theta_1 \wedge \overline{\theta_1} \wedge ... \wedge \theta_n \wedge \overline{\theta_n} \\
            &= \left( -\sum\limits_{j=1}^n \left(\Delta_{jj} + \overline{\Delta_{jj}} \right) (\eta + \overline{\eta}) \right) \wedge \omega^n.
        \end{split}
    \end{equation*}

    Obviously the Lee form $\theta = -\sum\limits_{j=1}^n \left(\Delta_{jj} + \overline{\Delta_{jj}}\} \right) (\eta + \overline{\eta})$ is closed, so $\omega$ is indeed lcb.
\end{proof}

\bigskip

Complementing the result by Endo and Pajitnov that the manifolds $T_M$ do not admit lcK metrics for a non-diagonalizable $M$ (\cite[Proposition 5.6]{pajitnov1}), we can use the structural equations \eqref{ecstructuradiffdiag} to show that, in fact, they do not admit lcK metrics in any case:

\begin{proposition}
    Endo-Pajitnov manifolds do not admit lcK metrics.
\end{proposition}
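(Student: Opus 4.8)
\emph{The plan} is to split along the dichotomy already present in the excerpt. When $M$ is not diagonalizable the statement is \cite[Proposition 5.6]{pajitnov1}, so I would assume $M$ diagonalizable and, after choosing the basis that puts $R^T$ in diagonal form, work throughout with the structure equations \eqref{ecstructuradiffdiag}. Suppose toward a contradiction that $\omega$ is lcK with Lee form $\theta$. Since $\theta$ is closed and $h^1(T_M)=1$ by \ref{nrbetti}, with the unique class represented by the invariant form $\eta+\overline\eta$ (which is closed, as $d(\eta+\overline\eta)=0$ by \eqref{ecstructuradiffdiag}, and is proportional to $\tfrac{d\Im w}{\Im w}$), I may write $\theta=c(\eta+\overline\eta)+df$ for a constant $c$ and a smooth $f$. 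Replacing $\omega$ by the conformal metric $e^{-f}\omega$ (lcK is conformally invariant), the Lee form becomes exactly $c(\eta+\overline\eta)$, and $c\neq 0$, since otherwise $e^{-f}\omega$ would be Kähler, contradicting $b_1(T_M)=1$.

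Next I would reduce to a metric depending on a single variable. Because the Lee form is now the \emph{fixed} invariant form $c(\eta+\overline\eta)$, the equation $d\omega=c(\eta+\overline\eta)\wedge\omega$ is \emph{linear} in $\omega$; lifting to the intermediate cover $\overline{T_M}\simeq\mathbb{T}^{2n+1}\times\mathbb{R}_+$ and averaging over $\mathbb{T}^{2n+1}$ exactly as in the proof that $T_M$ has no balanced metric (the torus fixes $\Im w$, hence $\eta+\overline\eta$, and averaging commutes with $d$) yields a positive $G_M$-invariant $(1,1)$-form $\tilde\omega$, still satisfying the same equation and now constant in $(\Re w,z)$, i.e.\ $\tilde\omega=i\sum_{a,b}g_{a\overline b}(\Im w)\,\phi^a\wedge\overline{\phi^b}$ with $\phi^0=\eta$, $\phi^k=\theta_k$. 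The $g_0$-invariance forces each $g_{a\overline b}$ to be invariant under $\Im w\mapsto\alpha\,\Im w$. Isolating the coefficient of $(\eta+\overline\eta)\wedge\theta_k\wedge\overline{\theta_k}$ in $d\tilde\omega=c(\eta+\overline\eta)\wedge\tilde\omega$ and using \eqref{ecstructuradiffdiag} gives the ODE $\Im w\cdot g_{k\overline k}'=(c+2\log|\beta_k|)\,g_{k\overline k}$, whose positive solutions are powers of $\Im w$; periodicity together with $\alpha\neq 1$ forces the exponent to vanish, so $c=-2\log|\beta_k|$ for \emph{every} $k$. Hence all the moduli $|\beta_1|,\dots,|\beta_n|$ coincide, say $|\beta_j|=\rho$.

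It then remains to exclude, arithmetically, that an Endo--Pajitnov matrix has all its complex eigenvalues of equal modulus, and this is the crux. From $\det M=1$ one has $\alpha\rho^{2n}=1$; replacing $M$ by $M^{-1}$ if necessary I may assume $\alpha>1$, so $\rho<1$. First I would prove the characteristic polynomial $P$ is irreducible over $\mathbb{Q}$: any monic integer factor all of whose roots lie among the $\beta_j,\overline{\beta_j}$ would have constant term of absolute value $\rho^{\deg}<1$, hence $0$, which is impossible; therefore $\alpha$ lies in the same irreducible factor as every $\beta_j$, and $P$ is irreducible of degree $2n+1$. Thus $\alpha$ is a Pisot unit all of whose conjugates have modulus $\rho$, and $t:=\rho^2$ generates $\mathbb{Q}(\alpha)$. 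Writing $P(x)=(x-\rho^{-2n})\prod_{j=1}^{n}\bigl(x^2-2\Re(\beta_j)\,x+\rho^2\bigr)$ and imposing that all coefficients of $P$ be integers produces a system of polynomial relations in $t$; reducing these modulo the degree-$(2n+1)$ minimal polynomial of $t$, each must hold identically, which collapses the intermediate coefficients and ultimately forces $t=1$, i.e.\ $\alpha=1$, contradicting $\alpha\neq 1$.

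\emph{I expect this last step to be the main obstacle.} The conformal normalization, the averaging, and the Lie-algebra computation leading to ``equal moduli'' are routine; the difficulty is turning equal moduli into a genuine contradiction. I have verified the integrality bookkeeping explicitly for $n=2$, where the two surviving coefficient conditions, reduced against the quintic satisfied by $t$, force all lower coefficients to vanish and leave $t^5-1=0$, hence $t=1$; organizing this computation cleanly and uniformly for arbitrary $n$ (equivalently, showing no odd-degree Pisot unit of degree $\geq 5$ has all conjugates on a single circle) is the delicate point of the argument.
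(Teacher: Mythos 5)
Your differential-geometric reduction is correct and is a genuinely different route from the paper's: instead of averaging to a fully left-invariant metric in the style of Belgun \cite{bel00} and identifying coefficients on the Lie algebra (which is what the paper does), you normalize the Lee form to $c(\eta+\overline{\eta})$ via $h^1(T_M)=1$ and conformal invariance, average only over the torus, and solve an ODE in $\Im w$. Both mechanisms land at the same place: the existence of an lcK metric forces all the moduli $|\beta_1|=\cdots=|\beta_n|$ to coincide, with $\rho:=|\beta_j|\neq 1$ because $\alpha\rho^{2n}=1$ and $\alpha\neq 1$. Up to this point your argument is sound.

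The genuine gap is the final arithmetic step, which you flag yourself but do not close: you must prove that for $n>1$ no matrix in $\textrm{SL}(2n+1,\mathbb{Z})$ with the Endo--Pajitnov eigenvalue pattern can have all complex eigenvalues on one circle, and the proposed ``integrality bookkeeping'' is not a proof. Besides being unverified beyond $n=2$, it rests on a false premise: you assert that $t=\rho^2$ has minimal polynomial of degree $2n+1$, but a priori one only knows $\mathbb{Q}(t)\supseteq\mathbb{Q}(\alpha)$ (because $t^{n}=1/\alpha$), and $[\mathbb{Q}(t):\mathbb{Q}]$ may be strictly larger, so ``reducing modulo the degree-$(2n+1)$ minimal polynomial of $t$'' is not available. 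Note also that the statement you need is sharp: for $n=1$ the equal-modulus configuration is automatic ($\alpha|\beta_1|^2=\det M=1$), consistent with the fact that Inoue surfaces do admit lcK metrics, so any correct argument must use $n>1$ essentially. Fortunately the gap closes in a few lines of Galois theory rather than coefficient manipulation. Your irreducibility argument for $P_M$ is correct, so the Galois group of the splitting field acts transitively on the roots; choose $\sigma$ with $\sigma(\beta_1)=\alpha$ and apply it to the multiplicative relation $\alpha\,(\beta_1\overline{\beta_1})^{n}=1$, getting $\sigma(\alpha)\,\alpha^{n}\,\sigma(\overline{\beta_1})^{n}=1$. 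Since $\sigma$ is injective and $\sigma(\beta_1)=\alpha$, neither $\sigma(\alpha)$ nor $\sigma(\overline{\beta_1})$ can equal $\alpha$, hence both have modulus $\rho=\alpha^{-1/(2n)}$; taking absolute values yields $\alpha^{\,n-(n+1)/(2n)}=1$, so $2n^2-n-1=0$, i.e.\ $n=1$, a contradiction. With that substitution your proof becomes complete and is a legitimate alternative to the paper's argument.
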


\begin{proof}
    We only need to look at the case where $M$ is diagonalizable. Assume $\Omega$ is an lcK metric on $T_M$ \ie $d \Omega = \Theta \wedge \Omega$ for a real closed $1$-form $\Theta$. Since $H^1_{dR}(T_M) \simeq H^1(\mathfrak{g})$, one may choose $\Theta$ to be left-invariant and by the averaging procedure in \cite[Theorem 7]{bel00}, we may take $\Omega$ to be left-invariant and make all the computations necessary on the Lie algebra, say $\omega \in \bigwedge^2(\mathfrak{g}^*)$ and $\theta \in \mathfrak{g}^*$.

    As $\omega$ is a $(1, 1)$-form, we can write it as
    \begin{equation}
        \omega = \sum\limits_{i = 1}^n c_i \eta \wedge \overline{\theta_i} + \sum\limits_{i = 1}^n d_i \overline{\eta} \wedge \theta_i + \sum\limits_{i, j = 1}^n \overline{a_{ij}} \theta_i \wedge \overline{\theta_j} + b \eta \wedge \overline{\eta},
    \end{equation}
    where, because $\omega = \overline{\omega}$, we have $d_i = \overline{c_i}$ and $a_{ij} = -\overline{a_{ji}}$. 

    Then
    \[
    d\omega = \sum\limits_{i = 1}^n c_i \log (\alpha \overline{\beta_i} )\eta \wedge \overline{\eta} \wedge \overline{\theta_i} - \sum\limits_{i = 1}^n d_i \log (\alpha \beta_i) \eta \wedge \overline{\eta} \wedge \theta_i - \sum\limits_{i, j = 1}^n \overline{a_{ij}} \log (\beta_i \overline{\beta_j}) (\eta + \overline{\eta}) \wedge \theta_i \wedge \overline{\theta_j}.
    \]

    On the other hand, any real closed $1$-form has the expression
    \[
    \theta = p (\eta + \overline{\eta}) + \sum\limits_{i=1}^n q_i (\theta_i + \overline{\theta_i}),
    \]
    so by identifying the terms in the equality $d \omega = \theta \wedge \omega$, we get, for instance,
    \[
    p = \log(\beta_i \overline{\beta_j}), \ \forall 1 \le i, j \le n, 
    \]
    which is obviously a contradiction.
\end{proof}

\bigskip

We now turn to the study of pluriclosed and astheno-\K \ metrics. The situation in which the matrix $M$ is diagonalizable is completely described in the result below:

\begin{theorem}
\label{th:Mdiag}
Let $T_M$ be the manifold associated to a diagonalizable $M \in \textrm{SL}(2n+1, \mathbb{Z})$. The following statements are equivalent:
\begin{enumerate}[(1)]
    \item $T_M$ admits a pluriclosed metric.
    \item There is $1 \leq i_0 \leq n$ such that $\alpha\beta_{i_0}\overline{\beta}_{i_0}=1$ and $|\beta_i|=1$, for any $1 \leq i \leq n$, $i \neq i_0$.
    \item $T_M$ admits an astheno-K\" ahler metric.
    \item The characteristic polynomial of $M$ splits in $\mathbb{Z}[X]$ as $P_M=f_0h$, where $f_0$ is a monic polynomial of degree 3 and $h$ is a self-reciprocal polynomial.
    \item $h^2(T_M) = n - 1$.
\end{enumerate}
\end{theorem}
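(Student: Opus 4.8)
The plan is to make statement (2) the logical hub: I would prove the two ``analytic'' equivalences (1)$\Leftrightarrow$(2) and (3)$\Leftrightarrow$(2) by reducing to left-invariant metrics and computing $dd^c$ on $\mathfrak g$, and the two ``arithmetic'' equivalences (2)$\Leftrightarrow$(4) and (2)$\Leftrightarrow$(5) by elementary manipulations of the eigenvalues. Throughout, the single relation that glues the four characterizations together is $\det M = \alpha\prod_{i=1}^n|\beta_i|^2 = 1$ together with $\alpha\neq1$; it is exactly this that upgrades the ``pointwise'' condition coming from the metric (for each $k$, either $|\beta_k|=1$ or $\alpha|\beta_k|^2=1$) into the global statement (2), in which precisely one index behaves exceptionally.

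For (1)$\Leftrightarrow$(2) I would first reduce to invariant metrics. Since $M$ is diagonalizable the de Rham cohomology is represented by invariant forms, and the translations $g_1,\dots,g_{2n+1}$ defining $T_M$ act holomorphically; averaging a pluriclosed form over the compact torus they generate commutes with $dd^c$ (which is natural under biholomorphisms), so a pluriclosed metric exists iff an invariant one does. I would then write a general invariant positive $(1,1)$-form $\omega = i\big(a\,\eta\wedge\overline\eta + \sum_k(c_k\,\eta\wedge\overline{\theta_k}-\overline{c_k}\,\theta_k\wedge\overline\eta) + \sum_{k,l}H_{kl}\,\theta_k\wedge\overline{\theta_l}\big)$, with $(H_{kl})$ Hermitian and positive definite, and compute $dd^c\omega=2i\,\partial\overline\partial\omega$ using \eqref{ecstructuradiffdiag}. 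A short check shows that the $\eta\wedge\overline\eta$, $\eta\wedge\overline{\theta}$ and $\theta\wedge\overline\eta$ pieces are $\partial\overline\partial$-closed, so only the $\theta_k\wedge\overline{\theta_l}$ terms survive, giving
\[
\partial\overline\partial\omega = i\sum_{k,l}H_{kl}\,\log(\beta_k\overline{\beta_l})\,\log(\alpha\beta_k\overline{\beta_l})\;\eta\wedge\overline\eta\wedge\theta_k\wedge\overline{\theta_l}.
\]
Hence $\omega$ is pluriclosed iff $H_{kl}\log(\beta_k\overline{\beta_l})\log(\alpha\beta_k\overline{\beta_l})=0$ for all $k,l$. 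Taking $k=l$ and using $H_{kk}>0$ forces $\log|\beta_k|^2\cdot\log(\alpha|\beta_k|^2)=0$, i.e.\ $|\beta_k|=1$ or $\alpha|\beta_k|^2=1$, for every $k$; inserting this into $\det M=1$ and using $\alpha\neq1$ leaves exactly one exceptional index $i_0$ with $\alpha|\beta_{i_0}|^2=1$, which is (2). Conversely, under (2) the canonical (lcb) metric $\omega=i(\eta\wedge\overline\eta+\sum_k\theta_k\wedge\overline{\theta_k})$ makes every coefficient above vanish, so it is pluriclosed.

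The equivalence (3)$\Leftrightarrow$(2) I would handle by the same scheme: reduce to invariant metrics and expand $dd^c\omega^{n-1}$ (recall $\dim_\C T_M=n+1$, so astheno-K\"ahler means $dd^c\omega^{\,n-1}=0$). Here $\omega^{n-1}$ is a sum of ``codimension-one'' monomials, and collecting the $\eta\wedge\overline\eta\wedge(\cdots)$ components of its $dd^c$ produces, for each omitted pair of indices, a sum of the same elementary factors $\log(\alpha|\beta_k|^2)$ and $\log|\beta_k|^2$; the computation is longer but yields the identical pointwise dichotomy, so $\det M=1$ again pins down (2). For the arithmetic side, (2)$\Leftrightarrow$(4) rests on separating the eigenvalues into those lying on the unit circle and those off it: under (2), $f_0:=(X-\alpha)(X-\beta_{i_0})(X-\overline{\beta_{i_0}})$ has real coefficients and constant term $-\alpha|\beta_{i_0}|^2=-1$, while the remaining roots are unit-modulus conjugate pairs $\beta_i,\overline{\beta_i}=1/\beta_i$ and so assemble into a self-reciprocal $h$; rationality (hence, by Gauss's lemma and monicity, integrality) of $f_0$ and $h$ I would extract by taking $h=\gcd(P_M,\widetilde{P_M})$, where $\widetilde{P_M}(X)=X^{2n+1}P_M(1/X)$ is the reciprocal polynomial, so that $h$ collects exactly the roots whose inverse is again a root and is automatically self-reciprocal. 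The reverse implication identifies $\alpha$ as a root of $f_0$ (it cannot be a root of $h$, else $1/\alpha$ would be a real eigenvalue $\neq\alpha$), reads off $\alpha|\beta_{i_0}|^2=1$ from the constant terms being $\pm1$, and recovers $|\beta_i|=1$ for the roots of the self-reciprocal factor. Finally (2)$\Leftrightarrow$(5) follows from Theorem \ref{nrbetti}: no eigenvalue of $M$ equals $1$, so $g_1=0$ and $h^2(T_M)=g_2$, the number of unordered pairs of eigenvalues with product $1$; enumerating which products $\lambda_i\lambda_j$ can equal $1$ under the sign constraints $\alpha>0$, $\Im\beta_i>0$ isolates the contributions, and $\det M=1$ with $\alpha\neq 1$ forces their number to be exactly $n-1$ precisely in case (2).

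I expect the genuinely delicate points to be two. The first is the reduction to invariant metrics: unlike the nilmanifold case there is no general symmetrization theorem for solvmanifolds, so I must argue by hand that the holomorphic torus action lets one average pluriclosed and astheno-K\"ahler forms into invariant ones — this is the step that makes the Lie-algebra computation legitimate. The second is the eigenvalue bookkeeping feeding (4) and (5): one has to control the possible multiplicative relations among the $\beta_i$ and verify that the self-reciprocal factor really carries the intended roots, which is where the conjugate-pair structure must be combined carefully with $\det M=1$. The $dd^c\omega^{n-1}$ computation for astheno-K\"ahler is the most calculation-heavy part, but conceptually it is a routine repetition of the pluriclosed case.
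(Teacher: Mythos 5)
Your proposal follows the paper's proof almost step for step: there too, (2) is the hub; the paper symmetrizes to left-invariant data and derives on $\mathfrak{g}$ exactly your obstruction (in its notation $a_{ij}\log(\alpha\beta_i\overline{\beta_j})\log(\beta_i\overline{\beta_j})=0$), then uses positivity of the diagonal coefficients together with $\det M=\alpha\prod_i|\beta_i|^2=1$ and $\alpha\neq1$ to isolate the single exceptional index, and it settles (4) and (5) by the same eigenvalue bookkeeping. Your two departures are harmless and arguably cleaner: for (2)$\Rightarrow$(1) you check the invariant metric $i(\eta\wedge\overline\eta+\sum_k\theta_k\wedge\overline{\theta_k})$, whereas the paper writes down the non-invariant metric $i\,dw\wedge d\overline w/(\Im w)^2+\Im w\, dz_1\wedge d\overline z_1+\sum_j i\,dz_j\wedge d\overline z_j$ on $\mathbb H\times\mathbb C^n$, which is simultaneously pluriclosed and astheno-K\"ahler and so disposes of (2)$\Rightarrow$(3) in the same stroke; and your $h=\gcd(P_M,\widetilde{P_M})$ device is a tidier route to integrality of the factors than the paper's minimal-polynomial argument.

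The genuine gap is in your reduction to invariant objects, on two counts. First, averaging over the compact torus generated by $g_1,\dots,g_{2n+1}$ produces only fiber-constant forms: they still depend on $\Im w$, so they do not live in $\Lambda^\bullet\mathfrak g^*$. This is precisely why the paper uses such fiber-averaging only in the balanced non-existence proof (where it then runs a separate direct argument), while for the pluriclosed, astheno-K\"ahler and lcK statements it invokes Belgun's symmetrization over the whole compact solvmanifold (\cite[Theorem 7]{bel00}); that is the step that legitimizes the Lie-algebra computation, and your plan needs it as well. Second, and more seriously for (3)$\Rightarrow$(2): you cannot ``reduce to invariant metrics and expand $dd^c\omega^{n-1}$'', because symmetrization does not commute with taking $(n-1)$-st powers; the astheno condition is linear in $\Omega^{n-1}$, not in $\Omega$. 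The paper therefore symmetrizes the $(n-1,n-1)$-form $\Omega^{n-1}$ itself and expands a \emph{general} invariant positive $(n-1,n-1)$-form with coefficients $a_{i,k}$, $b_{ij,k}$, positivity being used only to get $a_{jj}\neq0$; if you insist on working with a power of an invariant metric you must additionally invoke Michelsohn's $(n-1)$-st root extraction. Finally, a caveat you share with the paper rather than a defect relative to it: in (4)$\Rightarrow$(2) and (5)$\Rightarrow$(2), self-reciprocality alone does not place roots on the unit circle, and $g_2=n-1$ alone does not force (2): non-real inverse-conjugate quadruples $\lambda,\ \overline\lambda,\ 1/\lambda,\ 1/\overline\lambda$ off the unit circle (for instance the roots of $X^4+3X^2+1$) are compatible with the hypotheses of (4) and with $g_2=n-1$, yet violate (2). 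Neither your sketch nor the paper's text excludes them, so this is where genuine additional argument (or a strengthening of the hypotheses) would be required.
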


\begin{proof}
    We shall prove first that $(2) \Rightarrow (1)$ and $(2) \Rightarrow (3)$. We can assume $i_0 = 1$.
    
    Take $\tilde{\Omega}=\mathrm{i}\frac{dw \wedge d\overline{w}}{(\Im w)^2}+ \Im w dz_1 \wedge d\overline{z}_1 + \sum^n_{j} \mathrm{i}dz_j \wedge d\overline{z}_j$ on $\mathbb{H} \times \mathbb{C}^n$. Then it is straighforward to verify that $\tilde{\Omega}$ is $G_M$-invariant and satisfies $dd^c \tilde{\Omega}^k=0$, for any $1 \leq k \leq n$. In particular, $\tilde{\Omega}$ defines a metric on $T_M$ that is both pluriclosed and astheno-K\" ahler. 

    \medskip

    For $(1) \Rightarrow (2)$, we will make use of the structure equations \eqref{ecstructuradiff}. Assume $\Omega$ is a pluriclosed metric on $T_M$ \ie $dJd \Omega = 0$. As in the lcK case, by an averaging procedure similar to \cite[Theorem 7]{bel00}, we may take $\Omega$ to be left-invariant and make all the computations necessary on the Lie algebra, say $\omega \in \bigwedge^2(\mathfrak{g}^*)$.

    As $\omega$ is a $(1, 1)$-form, we can write it as
    \begin{equation}
        \label{omegacoordonate}
        \omega = \sum\limits_{i = 1}^n c_i \eta \wedge \overline{\theta_i} + \sum\limits_{i = 1}^n d_i \overline{\eta} \wedge \theta_i + \sum\limits_{i, j = 1}^n \overline{a_{ij}} \theta_i \wedge \overline{\theta_j} + b \eta \wedge \overline{\eta},
    \end{equation}
    where, because $\omega = \overline{\omega}$, we have $d_i = \overline{c_i}$ and $a_{ij} = -\overline{a_{ji}}$. 
        
    We introduce the following notations to ease the computations that follow: for any two vectors of differential forms $\alpha \in \left( \bigwedge^p(\mathfrak{g}^*) \right)^n$ and $\beta \in \left( \bigwedge^q(\mathfrak{g}^*) \right)^n$, let $$\langle \alpha, \beta \rangle = \sum\limits_{i = 1}^n \alpha_i \wedge \overline{\beta_i} \in \bigwedge\nolimits^{p + q}(\mathfrak{g}^*).$$ This is $\C$-linear in the first term, $\C$-antilinear in the second and $\langle \beta, \alpha \rangle = (-1)^{pq} \overline{\langle \alpha, \beta \rangle}$. Additionally, $d \langle \alpha, \beta \rangle = \langle d \alpha, \beta \rangle + (-1)^{\deg \alpha} \langle \alpha, d\beta \rangle$ and, for a matrix $P \in \mathcal{M}_n(\C), \langle P\cdot \alpha, \beta \rangle  = \langle \alpha, \overline{P}^T \cdot \beta \rangle$.

    Taking $A = (a_{ij})_{i, j}$, $v = (c_i)_i$ and $\theta = (\theta_i)_i$, we can rewrite the above expression as 
    \begin{equation}
        \label{omegaprodscalar}
        \omega = \langle \eta v, \theta \rangle - \langle \theta, \eta v \rangle + \langle \theta, A \cdot \theta \rangle + b \eta \wedge \overline{\eta}.
    \end{equation}
    Note that as $a_{ij} = -\overline{a_{ji}}$, we have $\langle A \cdot \alpha, \beta \rangle = - \langle \alpha, A \cdot \beta \rangle$.

     We first take the calculations as far as we can for the general structure equations \eqref{ecstructuradiff}, without using the fact that $M$ is diagonal, as we will make use of them later. If we denote $E = - \Delta$, we have $d \theta = (\eta + \overline{\eta}) \wedge E \cdot \theta$, hence
    \begin{equation*}
        \begin{split}
            d \omega &= \log \alpha \langle (\eta \wedge \overline{\eta}) v, \theta \rangle - \langle \eta v, (\eta + \overline{\eta}) \wedge E \cdot \theta \rangle - \langle (\eta + \overline{\eta}) \wedge E \cdot \theta, \eta v \rangle + \log \alpha \langle \theta, (\eta \wedge \overline{\eta}) v\rangle \\
            &+ \langle (\eta + \overline{\eta}) \wedge E \cdot \theta, A \cdot \theta \rangle - \langle \theta, (\eta + \overline{\eta}) \wedge A E \cdot \theta \rangle,
        \end{split}
    \end{equation*}
    or, after canceling the terms containing $\eta \wedge \eta$ and some rearranging using the properties of the pairing $\langle \ , \ \rangle$ stated above,
    \begin{equation*}
        \begin{split}
            d \omega &= \log \alpha \langle (\eta \wedge \overline{\eta}) v, \theta \rangle + \log \alpha \langle \theta, (\eta \wedge \overline{\eta}) v\rangle \\
            &- \langle \eta v, \eta \wedge E \cdot \theta \rangle - \langle \eta \wedge E \cdot \theta, \eta v \rangle \\
            &+ \langle A \cdot \theta, (\eta + \overline{\eta}) \wedge E \cdot \theta \rangle + \langle (\eta + \overline{\eta}) \wedge E \cdot \theta, A \cdot \theta \rangle.
        \end{split}
    \end{equation*}
    Note that in the expression above, each term is the conjugate of the other term written on the same line. Then 
    \begin{equation*}
        \begin{split}
            Jd \omega &= \log \alpha \langle (\eta \wedge \overline{\eta}) v, \theta \rangle - \overline{\log \alpha \langle (\eta \wedge \overline{\eta}) v, \theta \rangle} \\
            &+ \langle \eta \wedge E \cdot \theta, \eta v \rangle - \overline{\langle \eta \wedge E \cdot \theta, \eta v \rangle} \\
            &+ \langle A \cdot \theta, (\eta - \overline{\eta}) E \cdot \theta \rangle - \overline{\langle A \cdot \theta, (\eta - \overline{\eta}) E \cdot \theta \rangle}.
        \end{split}
    \end{equation*}

    Finally, on differentiating the above expression, the first line vanishes immediately and 
    \begin{equation*}
        \begin{split}
            dJd \omega &= \log \alpha \langle \eta \wedge \overline{\eta} \wedge E \cdot \theta, \eta v \rangle - \langle \eta \wedge (\eta + \overline{\eta}) E^2 \cdot \theta, \eta v \rangle + \log \alpha \langle \eta \wedge E \cdot \theta, \eta \wedge \overline{\eta} \rangle \\
            &- \left( (\overline{\log \alpha \langle \eta \wedge \overline{\eta} \wedge E \cdot \theta, \eta v \rangle - \langle \eta \wedge (\eta + \overline{\eta}) E^2 \cdot \theta, \eta v \rangle + \log \alpha \langle \eta \wedge E \cdot \theta, \eta \wedge \overline{\eta} \rangle} \right) \\
            &+ \langle (\eta + \overline{\eta}) \wedge A E \cdot \theta, (\eta - \overline{\eta}) E \cdot \theta \rangle - 2\log \alpha \langle A \cdot \theta, \eta \wedge \overline{\eta} \wedge E \cdot \theta \rangle + 2\langle A \cdot \theta, \eta \wedge \overline{\eta} \wedge E^2 \cdot \theta \rangle \\
            &- \left( \overline{\langle (\eta + \overline{\eta}) \wedge A E \cdot \theta, (\eta - \overline{\eta}) E \cdot \theta \rangle - 2\log \alpha \langle A \cdot \theta, \eta \wedge \overline{\eta} \wedge E \cdot \theta \rangle + 2\langle A \cdot \theta, \eta \wedge \overline{\eta} \wedge E^2 \cdot \theta \rangle} \right).
        \end{split}
    \end{equation*}
    Again the first two lines above vanish and we have
    \begin{equation*}
        \begin{split}
            dJd \omega &= 2 \langle \eta \wedge \overline{\eta} \wedge A E \cdot \theta, E \cdot \theta \rangle - 2 \log \alpha \langle A \cdot \theta, \eta \wedge \overline{\eta} \wedge E \cdot \theta \rangle + 2 \langle A \cdot \theta, \eta \wedge \overline{\eta} \wedge E^2 \cdot \theta \rangle \\
            &- \left( \overline{2 \langle \eta \wedge \overline{\eta} \wedge A E \cdot \theta, E \cdot \theta \rangle - 2 \log \alpha \langle A \cdot \theta, \eta \wedge \overline{\eta} \wedge E \cdot \theta \rangle + 2 \langle A \cdot \theta, \eta \wedge \overline{\eta} \wedge E^2 \cdot \theta \rangle} \right) \\
            &= 2 \eta \wedge \overline{\eta} \wedge \left( \langle A E \cdot \theta, E \cdot \theta \rangle - \log \alpha \langle A \cdot \theta, E \cdot \theta \rangle + \langle A \cdot \theta, E^2 \cdot \theta \rangle \right) \\
            &-2 \eta \wedge \overline{\eta} \wedge \overline{\left( \langle A E \cdot \theta, E \cdot \theta \rangle - \log \alpha \langle A \cdot \theta, E \cdot \theta \rangle + \langle A \cdot \theta, E^2 \cdot \theta \rangle \right)}.
        \end{split}
    \end{equation*}

    In the end, we get that $\omega$ is pluriclosed if any only if 
    \begin{equation*}
        \begin{split}
            &\langle A E \cdot \theta, E \cdot \theta \rangle - \log \alpha \langle A \cdot \theta, E \cdot \theta \rangle + \langle A \cdot \theta, E^2 \cdot \theta \rangle \\
            - &\langle E \cdot \theta, A E \cdot \theta \rangle + \log \alpha \langle E \cdot \theta, A \cdot \theta \rangle - \langle E^2 \cdot \theta, A \cdot \theta \rangle = 0,
        \end{split}
    \end{equation*}
    or, using the properties of $A$, the pairing $\langle \ , \rangle$ and reverting back to $\Delta = -E$,
    \begin{equation*}
        \langle (2\overline{\Delta}^T A \Delta + (\overline{\Delta}^T)^2 A + A \Delta^2 + \log \alpha( \overline{\Delta}^T A + A\Delta) ) \cdot \theta, \theta \rangle = 0.
    \end{equation*}

    As $(\theta_i \wedge \overline{\theta_j})_{i, j = \overline{1, n}}$ is a basis, this is equivalent to
    \begin{equation}
        \label{pluriclosed}
        2\overline{\Delta}^T A \Delta + (\overline{\Delta}^T)^2 A + A \Delta^2 + \log \alpha( \overline{\Delta}^T A + A\Delta) = 0.
    \end{equation}

    Now remember that according to our hypothesis, $M$ is diagonal, so $\Delta$ is diagonal with $\Delta_{jj} = \log \beta_j$. The pluriclosed condition \eqref{pluriclosed} then simplifies to 
    \begin{equation*}
        \begin{split}
            a_{ji} \left( 2\log \beta_i \ \overline{\log \beta_j} + (\log \beta_i)^2 + \overline{(\log \beta_j)}^2 + \log \alpha (\log \beta_i + \overline{\log \beta_j}) \right) = a_{ji} \log(\alpha \beta_i \overline{\beta_j}) \log(\beta_i \overline{\beta_j})  = 0, 
        \end{split}
    \end{equation*}
    for all $i, j = \overline{1, n}$.

    But as $\omega$ is Hermitian, we have $\omega(X, JX) > 0$ for all real $X \neq 0$. Then, from \eqref{omegacoordonate},
    \[
        \overline{a_{jj}} = \omega(Y_{n + j} + i Y_j, Y_{n + j} - i Y_j) = 2i \omega(Y_j, Y_{n + j}) = 2i \omega (JY_{n + j}, Y_{n + j}), 
    \]
    hence $a_{jj} \neq 0$ and we get that for any $1 \le j \le n$, $| \beta_j |^2 = 1 $ or $\alpha |\beta_j|^2 = 1$. However, since $\alpha |\beta_1|^2 ... |\beta_n|^2 = \det(M) = 1$ and $\alpha \neq 1$, it is obvious that this can only happen if there is one $i_0$ such that $\alpha |\beta_{i_0}|^2 = 1$ and $|\beta_j|^2 = 1$ for all other $j \neq i_0$.

    \medskip

    For the implication $(3) \Rightarrow (2)$, we proceed in a similar way: Let $\Omega$ be an astheno-\K \ metric on $T_M$. Again by an averaging procedure as in \cite[Theorem 7]{bel00} of the $(n-1, n-1)$-form $\Omega^{n-1}$, we obtain a positive left-invariant $(n-1, n-1)$-form $\omega$, which is $\partial\overline{\partial}$-closed.

    Since $\omega$ is an $(n-1, n-1)$-form, we can write it as
    \begin{equation*}
        \begin{split}
            \omega &= \sum\limits_{i, k = 1}^n a_{i,k} \ \theta_1 \wedge ... \wedge \widehat{\theta_i} \wedge ... \wedge \theta_n \wedge \overline{\theta_1} \wedge ... \wedge \widehat{\overline{\theta_k}} \wedge ... \wedge \overline{\theta_n} \\
            &+ \sum\limits_{i, j, k = 1}^n b_{ij,k} \ \eta \wedge \theta_1 \wedge ... \wedge \widehat{\theta_i} \wedge ... \wedge \widehat{\theta_j} \wedge ... \wedge \theta_n \wedge \overline{\theta_1} \wedge ... \wedge \widehat{\overline{\theta_k}} \wedge ... \wedge \overline{\theta_n} \\
            &+ \overline{\left( \sum\limits_{i, j, k = 1}^n b_{ij,k} \ \eta \wedge \theta_1 \wedge ... \wedge \widehat{\theta_i} \wedge ... \wedge \widehat{\theta_j} \wedge ... \wedge \theta_n \wedge \overline{\theta_1} \wedge ... \wedge \widehat{\overline{\theta_k}} \wedge ... \wedge \overline{\theta_n} \right)} \\ 
            &= \sum\limits_{i, k = 1}^n a_{i,k} \ \theta_1 \wedge ... \wedge \widehat{\theta_i} \wedge ... \wedge \theta_n \wedge \overline{\theta_1} \wedge ... \wedge \widehat{\overline{\theta_k}} \wedge ... \wedge \overline{\theta_n} \\
            &+ 2 \Re \sum\limits_{i, j, k = 1}^n b_{ij,k} \ \eta \wedge \theta_1 \wedge ... \wedge \widehat{\theta_i} \wedge ... \wedge \widehat{\theta_j} \wedge ... \wedge \theta_n \wedge \overline{\theta_1} \wedge ... \wedge \widehat{\overline{\theta_k}} \wedge ... \wedge \overline{\theta_n},
        \end{split}
    \end{equation*}
    where, because $\omega = \overline{\omega}$, we have $\overline{a_{i,k}} = (-1)^{n-1} a_{k, i}$. In order to compress the expressions, we'll use the notations
    \begin{equation*}
        \begin{split}
            \theta_{i, k} &= \theta_1 \wedge ... \wedge \widehat{\theta_i} \wedge ... \wedge \theta_n \wedge \overline{\theta_1} \wedge ... \wedge \widehat{\overline{\theta_k}} \wedge ... \wedge \overline{\theta_n} \\
            \theta_{ij,k} &= \theta_1 \wedge ... \wedge \widehat{\theta_i} \wedge ... \wedge \widehat{\theta_j} \wedge ... \wedge \theta_n \wedge \overline{\theta_1} \wedge ... \wedge \widehat{\overline{\theta_k}} \wedge ... \wedge 
            \overline{\theta_n},
        \end{split}
    \end{equation*}
    so 
    \begin{equation}
    \label{omegan-1coordonate}
        \begin{split}
            \omega = \sum\limits_{i, k = 1}^n a_{i,k} \theta_{i, k} + 2 \Re \sum\limits_{i, j, k = 1}^n b_{ij,k} \ \eta \wedge \theta_{ij, k}.
        \end{split}
    \end{equation}
    Then, using \eqref{ecstructuradiffdiag}, we have
    \begin{equation*}
        \begin{split}
            d \theta_{i, k} &= - \log \left( \frac{\beta_1 \beta_2 ... \beta_n \overline{\beta_1} ... \overline{\beta_n} }{\beta_i \overline{\beta_k}} \right) (\eta + \overline{\eta}) \wedge \theta_{i, k} = \log \left( \alpha \beta_i \overline{\beta_k} \right) (\eta + \overline{\eta}) \wedge \theta_{i, k} \\
            d \theta_{ij,k} &= - \log \left( \frac{\beta_1 \beta_2 ... \beta_n \overline{\beta_1} ... \overline{\beta_n} }{\beta_i \beta_j \overline{\beta_k}} \right) (\eta + \overline{\eta}) \wedge \theta_{ij, k} = \log \left( \alpha \beta_i \beta_j \overline{\beta_k} \right) (\eta + \overline{\eta}) \wedge \theta_{ij, k}.
        \end{split}
    \end{equation*}
    Consequently,
    \begin{equation*}
        \begin{split}
            d\omega &= \sum\limits_{i, k = 1}^n a_{i,k} \log \left( \alpha \beta_i \overline{\beta_k} \right) (\eta + \overline{\eta}) \wedge \theta_{i,k}\\
            &+ 2 \Re \sum\limits_{i, j, k = 1}^n b_{ij,k} \left( \log \alpha \ \eta \wedge \overline{\eta} \wedge \theta_{ij,k} - \log \left( \alpha \beta_i \beta_j \overline{\beta_k} \right) \eta \wedge \overline{\eta} \wedge \theta_{ij,k} \right) \\
            &= \sum\limits_{i, k = 1}^n a_{i,k} \log \left( \alpha \beta_i \overline{\beta_k} \right) (\eta + \overline{\eta}) \wedge \theta_{i,k} - 2 \Re \sum\limits_{i, j, k = 1}^n b_{ij,k} \log \left( \beta_i \beta_j \overline{\beta_k} \right) \ \eta \wedge \overline{\eta} \wedge \theta_{ij,k}.
        \end{split}
    \end{equation*}

    As $\theta_{i, k}$ is $(n-1, n-1)$, $\theta_{ij, k}$ is $(n-2, n-1)$ and $J (\Re \gamma) = i \Im (J \gamma)$ for any form $\gamma$, we have
    \[
        Jd\omega = \sum\limits_{i, k = 1}^n a_{i,k} \log \left( \alpha \beta_i \overline{\beta_k} \right) (\overline{\eta} - \eta) \wedge \theta_{i,k} + 2i \Im \sum\limits_{i, j, k = 1}^n b_{ij,k} \log \left( \beta_i \beta_j \overline{\beta_k} \right) \ \eta \wedge \overline{\eta} \wedge \theta_{ij,k}.
    \]

    Note that the second term is closed, so we finally get 
    \begin{equation*}
        \begin{split}
            dJd\omega &= 2 \log \alpha \sum\limits_{i, k = 1}^n a_{i,k} \log \left( \alpha \beta_i \overline{\beta_k} \right) \overline{\eta} \wedge \eta \wedge \theta_{i,k} - \sum\limits_{i, k = 1}^n a_{i,k} \left( \log \left( \alpha \beta_i \overline{\beta_k} \right) \right)^2 (\overline{\eta} - \eta) \wedge (\eta + \overline{\eta}) \wedge \theta_{i, k} \\
            &= 2 \sum\limits_{i, k = 1}^n a_{i,k} \log \left( \alpha \beta_i \overline{\beta_k} \right) \left( \log \left( \alpha \beta_i \overline{\beta_k} \right) - \log \alpha \right) \eta \wedge \overline{\eta} \wedge \theta_{i,k} \\
            &= 2 \sum\limits_{i, k = 1}^n a_{i,k} \log \left( \alpha \beta_i \overline{\beta_k} \right) \log \left(\beta_i \overline{\beta_k} \right) \eta \wedge \overline{\eta} \wedge \theta_{i,k}.
        \end{split}
    \end{equation*}
    It follows that $\Omega$ is astheno-\K \ if and only if $a_{i,k} \log \left( \alpha \beta_i \overline{\beta_k} \right) \log \left(\beta_i \overline{\beta_k} \right) = 0$ for all $i, k = \overline{1, n}$.

    But then, exactly as in the pluriclosed case, $a_{jj} \neq 0$ means that for any $1 \le j \le n$, $| \beta_j |^2 = 1 $ or $\alpha |\beta_j|^2 = 1$ \ie there is one $i_0$ such that $\alpha |\beta_{i_0}|^2 = 1$ and $|\beta_j|^2 = 1$ for all other $j \neq i_0$.

    \medskip

    The implication $(4) \Rightarrow (2)$ is straightforward, since it implies that the roots of $f_0$ are $\alpha$, $\beta_{i_0}$, $\overline{\beta}_{i_0}$ satisfying $\alpha \beta_{i_0} \overline{\beta}_{i_0}=1$ and the roots of $h$ have norm 1. Conversely, if (2) is satisfied, take any $\beta_i$ such that $|\beta_i|=1$. Then the minimal polynomial of $\beta_i$ cannot have $\alpha$ as a root, since the roots of this minimal polynomial come in pairs with their inverse. Therefore, the minimal polynomial $f_0$ of $\alpha$ is of degree 3 and has the roots $\alpha$, $\beta_{i_0}$, $\overline{\beta}_{i_0}$. This further implies  $P_M$ splitting as $f_0h$, where $h$ has the roots $\{\beta_i\}_{i \neq 0}$ and since they come in pairs with their inverses, $h$ is a self-reciprocal polynomial.

    \medskip

    Finally, the equivalence $(2) \iff (5)$ is immediate using \ref{nrbetti}. Indeed, we have $h^2(T_M) = g_2$ and, because $M$ is diagonalizable, this is the algebraic multiplicity of $M^{\wedge 2}$ \ie the number of pairs of eigenvalues whose product is $1$. By the properties of $M$, this can only happen if $n-1$ of the eigenvalues $\beta_1, ..., \beta_n$ are of norm $1$.
\end{proof}

\bigskip

The lemma below illustrates the usefulness of the characterization found in \ref{th:Mdiag} for generating examples in any dimension.

\begin{lemma}
    Take $P$ an integer monic polynomial of degree $2n + 1$ such that $P = f_0h$ in $\mathbb{Z}[X]$ where $f_0$ has degree $3$, one real root $\alpha > 0$, $\alpha \neq 1$, $f_0(0) = -1$, while $h$ has purely complex roots and is self-reciprocal. Assume further that the unique factorization of $h$ contains only polynomials without multiple roots (this happens, for instance, if $h$ is irreducible).

    Then there is a diagonalizable matrix $M \in \textrm{SL}_{2n + 1}(\Z)$ satisfying all the properties required for the Endo-Pajitnov construction such that $P_M = P$. In particular, using \ref{th:Mdiag}, there is complex compact manifold $T_M$ of dimension $n + 1$ that carries both a pluriclosed and an astheno-\K \ metric that is not of OT type.
\end{lemma}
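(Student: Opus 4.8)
The plan is to produce $M$ as the companion matrix of $P$ and to check, one requirement at a time, that it fits the Endo--Pajitnov construction; the two metrics will then follow at once from Theorem~\ref{th:Mdiag}, and only the non-OT claim will require a genuinely arithmetic argument.

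First I would set $M := C_P$, so that $M \in \mathcal{M}_{2n+1}(\Z)$ and $P_M = P$ automatically. Since $h$ is monic and self-reciprocal it has constant term $1$ (the anti-palindromic alternative is impossible, as it would produce the real root $1$, whereas $h$ has only complex roots), so $P(0) = f_0(0)\,h(0) = -1$ and $\det M = (-1)^{2n+1}P(0) = 1$, i.e.\ $M \in \textrm{SL}_{2n+1}(\Z)$. The roots of $P$ are the eigenvalues of $M$: from $f_0$ one gets the single real root $\alpha>0$, $\alpha\neq 1$, together with a conjugate pair $\beta_{i_0},\overline{\beta_{i_0}}$, and from $h$ one gets $2n-2$ further non-real roots forming $n-1$ conjugate pairs. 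Choosing the $n$ roots of positive imaginary part as $\beta_1,\dots,\beta_n$, the matrix $M$ has exactly one real eigenvalue $\alpha$ and all others in conjugate pairs, which is precisely the spectral data the construction demands.

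Next I would check that $M$ is diagonalizable, equivalently that $P$ is squarefree. The factor $h$ is squarefree by the hypothesis on its factorization. The factor $f_0$ is moreover irreducible over $\Q$: if it were reducible it would have a rational, hence integer, root, which can only be $\alpha$; writing $f_0 = (X-\alpha)q$ with $q \in \Z[X]$ monic and $q(0) = |\beta_{i_0}|^2 \in \Z_{>0}$, the relation $f_0(0) = -\alpha\, q(0) = -1$ would force $\alpha = 1$, a contradiction. Finally $f_0$ and $h$ are coprime, for a common root would, by irreducibility of $f_0$, force $f_0 \mid h$ and hence place the real number $\alpha$ among the roots of $h$, contradicting that $h$ has only complex roots. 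Thus $P$ has distinct roots and $M = C_P$ is diagonalizable.

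With this, $M$ is a diagonalizable element of $\textrm{SL}_{2n+1}(\Z)$ meeting every requirement of the construction, and $P_M = f_0h$ with $\deg f_0 = 3$ and $h$ self-reciprocal is exactly condition~(4) of Theorem~\ref{th:Mdiag}; hence $T_M$ admits both a pluriclosed and an astheno-\K{} metric. The remaining and hardest point is that $T_M$ is not of OT type, diagonalizability alone being insufficient (cf.\ the remark after Theorem~\ref{th:solv}). For this I would use the arithmetic rigidity of the OT construction. A biholomorphism $T_M \simeq X(K,U)$ would, since $h^1(T_M)=1$ and an OT manifold has first Betti number equal to its number of real places, give $K$ a single real place; as $\dim_\C T_M = n+1$, this forces $[K:\Q] = 2n+1$ and identifies the monodromy with multiplication by a unit $\varepsilon$ on $\mathcal{O}_K \simeq \Z^{2n+1}$. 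The characteristic polynomial of that map is a power of the minimal polynomial of $\varepsilon$; since $M$ has distinct eigenvalues, $\varepsilon$ must generate $K$ and $P$ must be this minimal polynomial, hence irreducible of degree $2n+1$. This contradicts $P = f_0h$ with $\deg f_0 = 3 < 2n+1$ (using $n>1$). Therefore $T_M$ is biholomorphic to no OT manifold, completing the proof.
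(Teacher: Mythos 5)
Your construction $M := C_P$ (the companion matrix of $P$ itself) has a genuine gap: a companion matrix is diagonalizable if and only if its characteristic polynomial is squarefree, and the hypothesis of the lemma does \emph{not} guarantee that $P$ is squarefree. You assert that ``the factor $h$ is squarefree by the hypothesis on its factorization,'' but this misreads the hypothesis: it asks only that each irreducible polynomial appearing in the factorization of $h$ have no multiple roots (which is automatic in characteristic zero), not that no factor is repeated. Repeated factors are explicitly meant to be allowed: the example immediately following the lemma in the paper takes $P = (X^3+X-1)(X^2+1)^2$, so $h = (X^2+1)^2$ is not squarefree, and for this $P$ your matrix $C_P$ has minimal polynomial equal to $P$ and is therefore \emph{not} diagonalizable --- the construction fails on the paper's own example. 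The paper avoids this by taking $M$ to be the block-diagonal matrix whose blocks are the companion matrices of the irreducible factors of $P$, counted with multiplicity; each block is diagonalizable (irreducible polynomials over $\Q$ have simple roots), hence so is their direct sum, even when $P$ has repeated roots. All your other verifications ($\det M = 1$ from $f_0(0) = -1$ and $h(0) = 1$, the spectral conditions, irreducibility of $f_0$, coprimality of $f_0$ and $h$) go through verbatim for the block-diagonal choice.

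The same oversight propagates into your argument for the non-OT claim, where you invoke ``since $M$ has distinct eigenvalues.'' That step fails when $h$ has repeated factors, but it can be repaired: the characteristic polynomial of multiplication by a unit $\varepsilon$ on $\mathcal{O}_K$ is a power $m_\varepsilon^{k}$ of the minimal polynomial of $\varepsilon$, whereas $P = f_0 h$ always has at least two distinct irreducible factors ($f_0$ has a real root, the irreducible factors of $h$ do not), so $P$ is never of the form $m^k$; this yields the contradiction without any squarefreeness assumption. (For what it is worth, the paper's proof does not address the non-OT claim at all, so this part of your write-up is a useful addition once corrected.)
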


\begin{proof}
    Simply take $M$ to be made out of diagonal blocks representing the companion matrices of the irreducible factors of $P$. Since they all have simple roots, all these companion matrices are diagonalizable, thus $M$ is diagonalizable. 
\end{proof}
        
\begin{example}
    Starting from the polynomial $$P(X) = X^7 + 3X^5 - X^4 + 3X^3 - 2X^2 + X - 1 = (X^3 + X - 1)(X^2 + 1)^2,$$ the corresponding $T_M$ constructed as above is a $4$-dimensional complex manifold, not of OT type, that is both pluriclosed and astheno-\K.
\end{example}

\bigskip

While we now have a complete characterization for the existence of pluriclosed metrics on Endo-Pajitnov manifolds for a diagonalizable $M$, we end by proving below a companion result stating that the situation from \ref{th:Mdiag} is the only one allowing the existence of such a metric:

\begin{theorem}
    \label{th:Mnediag}
    Let $T_M$ be the manifold associated to a matrix $M \in \textrm{SL}(2n+1, \mathbb{Z})$. The following are equivalent:
    \begin{enumerate}[(1)]
        \item $T_M$ admits a pluriclosed metric.
        \item $M$ is diagonalizable and there is $1 \leq i_0 \leq n$ such that $\alpha\beta_{i_0}\overline{\beta}_{i_0}=1$ and $|\beta_i|=1$, for any $1 \leq i \leq n$, $i \neq i_0$.
    \end{enumerate}
\end{theorem}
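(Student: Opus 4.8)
The implication $(2)\Rightarrow(1)$ is immediate: condition $(2)$ asserts in particular that $M$ is diagonalizable and satisfies condition $(2)$ of Theorem \ref{th:Mdiag}, so $T_M$ carries a pluriclosed metric by that theorem. The whole content is therefore the converse $(1)\Rightarrow(2)$, and the new difficulty compared with Theorem \ref{th:Mdiag} is that we may no longer assume $\Delta=\log R^T$ to be diagonal. The plan is to start exactly as there: average a pluriclosed metric to a left-invariant one, written as in \eqref{omegaprodscalar} with $A=(a_{ij})$ skew-Hermitian. Crucially, the derivation in Theorem \ref{th:Mdiag} of the pluriclosed equation \eqref{pluriclosed},
\[
2\overline{\Delta}^T A\Delta + (\overline{\Delta}^T)^2 A + A\Delta^2 + \log\alpha\,(\overline{\Delta}^T A + A\Delta)=0,
\]
was carried out for the general structure equations \eqref{ecstructuradiff} \emph{without} using diagonality, so it remains valid verbatim. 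The key new observation is that, writing $\mathcal L(A)=\overline{\Delta}^T A$ and $\mathcal R(A)=A\Delta$ (two commuting operators on $\mathcal M_n(\C)$), equation \eqref{pluriclosed} factors as
\[
(\mathcal L+\mathcal R)\bigl((\mathcal L+\mathcal R)+\log\alpha\bigr)(A)=0 .
\]

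Next I would analyze this via the Jordan decomposition $\mathcal L+\mathcal R=\mathcal S+\mathcal N$, where the semisimple part $\mathcal S$ multiplies the entry $a_{ij}$ by $\overline{\log\beta_i}+\log\beta_j$ and the nilpotent part $\mathcal N(A)=N^*A+AN$ comes from the strictly-upper-triangular part $N$ of $\Delta$. Since the roots $0$ and $-\log\alpha$ of $p(x)=x(x+\log\alpha)$ are simple (as $\alpha\neq1$), the factored equation forces every component of $A$ in an $\mathcal S$-eigenspace with eigenvalue $\mu\notin\{0,-\log\alpha\}$ to vanish, and forces $\mathcal N(A_\mu)=0$ on the two eigenspaces $\mu\in\{0,-\log\alpha\}$. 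Applied to the diagonal entries, whose $\mathcal S$-eigenvalue is $\overline{\log\beta_j}+\log\beta_j=\log|\beta_j|^2$, together with the fact that positivity of $\omega$ forces $H:=iA$ to be positive definite (the overall sign is irrelevant), hence $a_{jj}\neq0$ for all $j$, this gives $|\beta_j|^2\in\{1,\alpha^{-1}\}$ for every $j$. Combined with $\alpha\prod_j|\beta_j|^2=\det M=1$ and $\alpha\neq1$, exactly one index $i_0$ satisfies $\alpha|\beta_{i_0}|^2=1$ and $|\beta_j|=1$ otherwise; this is the arithmetic half of $(2)$, obtained \emph{without} assuming diagonalizability.

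It remains to prove that $M$ is diagonalizable, which I expect to be the main obstacle. Since $\alpha$ is a simple eigenvalue and $\beta_{i_0}$ is the unique $\beta$ of modulus $\alpha^{-1/2}\neq1$, the index $i_0$ sits in a $1\times1$ Jordan block, so any failure of diagonalizability lives among the unit-modulus $\beta_j$. Using the vanishing above I would first show that $A$ is block-diagonal with respect to grouping indices by the value of $\beta_j$ (cross-block entries lie in eigenspaces with $\mu\neq0,-\log\alpha$), and that $N$ is block-diagonal for the same grouping. On each unit-modulus block the surviving constraint is $N^*A+AN=0$ with $N$ nilpotent and $H=iA$ positive definite; multiplying by $i$ gives $N^*H+HN=0$, whence $H^{-1}N^*H=-N$, so $N$ is skew-adjoint for the positive-definite inner product $\langle u,v\rangle=v^*Hu$. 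A skew-adjoint operator is normal, hence diagonalizable, and a diagonalizable nilpotent operator is zero; thus $N=0$ on every block, contradicting the existence of a Jordan block of size $\geq2$. Hence $M$ is diagonalizable, completing $(1)\Rightarrow(2)$. The subtle points to get right are the branch bookkeeping for $\overline{\log\beta_i}+\log\beta_j$ (only its vanishing, or equality to $-\log\alpha$, matters, and this reduces to equalities of moduli and arguments of the $\beta_j$) and the fact that principal-submatrix restrictions of $A$ inherit the definiteness of $iA$.
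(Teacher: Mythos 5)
Your proof is correct, and while it shares the unavoidable skeleton with the paper (average to a left-invariant metric, reduce to the matrix equation \eqref{pluriclosed}, which the paper indeed derives for general upper-triangular $\Delta$), your analysis of \eqref{pluriclosed} is genuinely different from the paper's. The paper exploits Remark \ref{rmk:nudepdeformaJordan}: since the biholomorphism type of $T_M$ is independent of the ordering of the eigenvalues in the Jordan form, it suffices to compute the single entry $P_{11}$ (clean because the first column of an upper-triangular matrix has only a diagonal entry), giving the dichotomy for $\beta_1$ and hence, by relabelling, for every $\beta_i$; diagonalizability is then obtained by placing a hypothetical Jordan block at positions $2,3$ and computing $P_{23} = a_{22}\Delta_{23}\log\alpha$, forcing $\Delta_{23}=0$. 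You instead factor \eqref{pluriclosed} as $p(\mathcal L+\mathcal R)(A)=0$ with $p(x)=x(x+\log\alpha)$ having simple roots, split $\mathcal L+\mathcal R=\mathcal S+\mathcal N$ into commuting semisimple and nilpotent parts, and conclude that all entries of $A$ outside the $\mathcal S$-eigenvalues $\{0,-\log\alpha\}$ vanish while $N^*A+AN=0$ on the surviving blocks; the arithmetic half then follows from $a_{jj}\neq 0$ uniformly in $j$ (no relabelling needed), and diagonalizability from the elegant observation that $N^*H+HN=0$ with $H=\pm iA$ positive definite makes the nilpotent $N$ skew-adjoint for a genuine inner product, hence normal, hence zero. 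What each approach buys: the paper's is shorter and more elementary, at the cost of invoking the relabelling freedom twice and only probing two matrix entries; yours treats all indices and all Jordan blocks at once, works entirely within one fixed Jordan form, and isolates the structural mechanism (simple roots of $p$, plus positivity excluding nilpotency). Two small points you should make explicit in a write-up: the decomposition $\mathcal S+\mathcal N$ is a valid commuting (Jordan--Chevalley) decomposition only because $[D,N]=0$, which holds since $\Delta=\log R^T$ is block-diagonal with constant diagonal on each Jordan block; and the positivity transfer (that skew-Hermitian $A$ with positive metric yields $\pm iA$ positive definite, and that this passes to the principal blocks) -- both are true and routine, as you indicate.
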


\begin{proof}
    Considering \ref{th:Mdiag}, we only have to prove $(1) \Rightarrow (2)$. Take $\omega$ a pluriclosed metric on $T_M$. Again via an averaging procedure, we can perform the computations in the Lie algebra,
    \begin{equation*}
        \omega = \sum\limits_{i = 1}^n c_i \eta \wedge \overline{\theta_i} + \sum\limits_{i = 1}^n d_i \overline{\eta} \wedge \theta_i + \sum\limits_{i, j = 1}^n \overline{a_{ij}} \theta_i \wedge \overline{\theta_j} + b \eta \wedge \overline{\eta},
    \end{equation*}
    with $A = (a_{ij})_{ij}$ a Hermitian-antisymmetric matrix. Denote 
    \[
    P = 2\overline{\Delta}^T A \Delta + (\overline{\Delta}^T)^2 A + A \Delta^2 + \log \alpha( \overline{\Delta}^T A + A\Delta).
    \]
    where as before $\Delta = \log(R)$, so is an upper-triangular matrix. By \eqref{pluriclosed}, the pluriclosed condition is equivalent to $P = 0$.

    By direct computation, $P_{11} = a_{11} \log(\alpha \beta_1 \overline{\beta_1}) \log (\beta_1 \overline{\beta_1})$. Because $\omega$ is positive definite, $a_{11} \neq 0$, so at least for $\beta_1$ we have that either $\alpha \beta_1 \overline{\beta_1} = 1$ or $|\beta_1| = 1$.

    But by \ref{rmk:nudepdeformaJordan}, as the biholomorphism class of $T_M$ does not depend on the choice of Jordan form $R$, the order of the complex eigenvalues is arbitrary, so we in fact have that for any $1 \le i \le n$, $\alpha \beta_i \overline{\beta_i} = 1$ or $|\beta_i| = 1$. This can only happen if there is one $i_0$ such that $\alpha |\beta_{i_0}|^2 = 1$ and $|\beta_j|^2 = 1$ for all other $j \neq i_0$.

    To prove that $M$ is diagonalizable \ie $\Delta$ is diagonal, we use the same trick. Assume that it is not; then we can choose an ordering of the eigenvalues such that $\alpha \beta_1 \overline{\beta_1} = 1$, $| \beta_i | = 1$ for all $2 \le i \le n$ and $\beta_2 = \beta_3$. Then
    \[
    \Delta = 
    \begin{pmatrix}
        \log \beta_1 & 0 & 0 & \dots & 0 \\
        0 & \log \beta_2 & \Delta_{23} & \dots & 0 \\
        0 & 0 & \log \beta_2 & \dots & 0 \\
        \dots & \dots & \dots & \dots & \dots 
    \end{pmatrix},    
    \]
    where $\Delta_{23} \neq 0$ can be determined explicitly from the formula of the logarithm of a Jordan block (see \eg \cite[Section 11.3]{higham}).

    On the other hand, by direct computation, 
    \[
    P_{23} = \log (\alpha \beta_2 \overline{\beta_2}) \left( a_{23} \log(\beta_2 \overline{\beta_2}) + a_{22} \Delta_{23} \right) =  a_{22} \Delta_{23} \log \alpha.
    \]
    As $a_{22} \neq 0$, it follows that $\Delta_{23} = 0$, a contradiction.
\end{proof}

\bigskip

\textbf{Acknowledgments.} We thank Liviu Ornea for many helpful discussions along the way.

\end{document}